
\documentclass[12pt]{article}
\usepackage[english]{babel}
\usepackage{graphicx}
\usepackage{amsmath,amsthm,amssymb}
\usepackage{times}
\usepackage{enumerate}
\usepackage{amsmath}
\usepackage{amsfonts}
\usepackage{pdfsync}
\usepackage{cleveref}
\usepackage{autonum}
\usepackage[usenames, dvipsnames]{color}


\newcommand{\R}{{\mathbb R}}

\renewcommand{\L}{\Lambda}

\DeclareMathOperator{\Dom}{Dom}

\DeclareMathOperator{\AC}{W^{1,1}}

\DeclareMathOperator{\dist}{dist}

\pagestyle{myheadings}
\def\titlerunning#1{\gdef\titrun{#1}}
\makeatletter
\def\author#1{\gdef\autrun{\def\and{\unskip, }#1}\gdef\@author{#1}}
\def\address#1{{\def\and{\\}\renewcommand{\thefootnote}{}%
\footnote {#1}}%
\markboth{\autrun}{\titrun}}
\makeatother
\def\email#1{e-mail: #1}
\def\subjclass#1{{\renewcommand{\thefootnote}{}%
\footnote{\emph{Mathematics Subject Classification (2010):} #1}}}
\def\keywords#1{\par\medskip
\noindent\textbf{Keywords.} #1}


\theoremstyle{definition}
\newtheorem{theorem}{Theorem}[section]
\newtheorem{corollary}[theorem]{Corollary}
\newtheorem{lemma}[theorem]{Lemma}

\newtheorem{proposition}[theorem]{Proposition}
\newtheorem*{theorem*}{Theorem}
\usepackage{amsthm}



\theoremstyle{definition}
\newtheorem{definition}[theorem]{Definition}
\newtheorem{remark}[theorem]{Remark}
\newtheorem{example}[theorem]{Example}

\newtheorem*{hypothesisH1'}{Hypothesis $\mathbf{(H_2')}$}
\newtheorem*{hypothesisH1''}{Hypothesis ($\mathcal S^{\infty}_{x_*}$)}
\newtheorem*{hypothesisS'}{Hypothesis ($\mathcal S^{\infty}_{x_*}$)}
\newtheorem*{hypothesisH'}{Hypothesis ($\mathcal A^{\infty}_{x_*}$)}

\newtheorem*{problemP'}{Problem \textbf{(P$'$)}}
\newtheorem*{GrowthGloc}{Growth Condition (G)}
\newtheorem*{ConditionH}{Growth Condition (H$_B^{\delta}(\chi)$)}

\newtheorem*{conditionS}{Condition (S)}
\newtheorem*{Notation}{Notation}
\newtheorem*{basicass}{Basic Assumptions and Notation}
\newtheorem*{AssumptionG}{Superlinearity}


\numberwithin{equation}{section}

%



\begin{document}
\titlerunning{Uniform boundedness of minimizers}
\title{Uniform boundedness for the optimal controls of a discontinuous, non--convex Bolza problem}
\author{Piernicola Bettiol
\and
Carlo Mariconda
}
\maketitle
\date
\address{Univ Brest,  UMR CNRS 6205, Laboratoire de Math\'ematiques de Bretagne Atlantique,
 6 Avenue Victor Le Gorgeu, Brest,
29200-F.; \email{piernicola.bettiol@univ-brest.fr }
\and
Carlo Mariconda (corresponding author), ORCID $0000-0002-8215-9394$),  Universit\`a
degli Studi di Padova,  Dipartimento di Matematica  ``Tullio Levi-Civita'',   Via Trieste 63, 35121 Padova, Italy;  \email{carlo.mariconda@unipd.it}}
\subjclass{Primary 49N60; Secondary 49K05, 90C25}
\keywords{
}
\begin{abstract}
We consider a Bolza type optimal control problem of the form
\begin{equation}\min J_{t}(y,u):=\int_t^T\L(s,y(s), u(s))\,ds+g(y(T))\tag{P$_{t,x}$}\end{equation}
\text{\textbf{Subject to:} }
\begin{equation}\label{tag:admissible}\tag{D}\begin{cases}
y\in \AC([t,T];\R^n)\\y'=b(y)u\text{ a.e. } s\in [t,T], \,y(t)=x\\u(s)\in \mathcal U\text{ a.e. } s\in [t,T],\, y(s)\in \mathcal S\,\,\forall s\in [t,T],
\end{cases}
\end{equation}
where $\L(s,y,u)$ is locally Lipschitz in  $s$,  just Borel in $(y,u)$, $b$ has at most a linear growth and both the Lagrangian $\L$ and the running cost function $g$ may take the value $+\infty$. If $b\equiv 1$ and $g\equiv 0$ problem (P$_{t,x}$) is the classical one of the calculus of variations.  We suppose the validity a slow growth condition in $u$, introduced by Clarke in 1993, including Lagrangians of the type $\L(s,y,u)=\sqrt{1+|u|^2}$ and $\L(s,y,u)=|u|-\sqrt{|u|}$ and the superlinear case.
If $\L$ is real valued, any family of optimal pairs $(y_*, u_*)$ for (P$_{t,x}$)  whose energy $J_t(y_*, u_*)$ is equi-bounded as $(t,x)$ vary in a compact set, has $L^{\infty}$ -- equibounded optimal controls.
If $\L$ is extended valued, the same conclusion holds under an additional lower semicontinuity assumption on $(s,u)\mapsto\L(s,y,u)$ and on the structure of the effective domain. No convexity, nor local Lipschitz continuity is assumed on the variables $(y,u)$. As an application we obtain the local Lipschitz continuity of the value function under slow growth assumptions.
\end{abstract}
\section{Introduction}
A major issue arising in the basic problem of the calculus of variations is the Lipschitz regularity of the minimizers.
Providing positive answers on this issue is often a first step towards higher regularity properties, and it allows numerical methods to catch the value of the infimum.

We consider here optimal control problems, such as (P$_{t,x}$)-(D) below, imposing very weak assumptions on the Lagrangian $\L(s,y,u)$, where $s\in [t_0,T]$ (the time variable),  $y\in \R^n$ (the state variable) and $u\in \R^m$  (the control variable),  motivated by the fact that, starting from the calculus of variations case (i.e. when $b\equiv 1$, $u\in \R^n$) there are discontinuous and non-convex problems that admit existence of minimizers, even if the classical Tonelli's existence conditions are not satisfied.

In the calculus of variations setting several results appeared on the subject following Tonelli himself \cite{Tonelli}: we just mention  Clarke -- Vinter \cite{CVTrans}, Ambrosio -- Ascenzi -- Buttazzo \cite{AAB}, Cellina \cite{Cellina}.
In the autonomous case,  just superlinearity and even slower growths suffice to obtain Lipschitzianity of the minimizers, whether they exist among the absolutely continuous functions (Dal Maso -- Frankowska \cite{DMF}, Mariconda -- Treu \cite{MTLip}).

In the nonautonomous case growth conditions in general do not guarantee the Lipschitzianity of the minimizers. A celebrated example by Ball -- Mizel \cite{BM} shows that there are polynomial Lagrangians that satisfy  Tonelli's existence assumptions (convexity in the velocity variable and superlinearity) for which  even the Lavrentiev phenomenon occurs (i.e., the infimum of the functional among Lipschitz functions is strictly greater than the infimum taken over the absolutely continuous ones).
So, extra hypotheses are needed in the nonautonomous setting to make sure that minimizers are Lipschitz continuous.

A well established approach consists in imposing superlinearity together with some regularity conditions on the state or velocity variables in order to ensure the validity of both the Euler condition and Weierstrass inequality, see \cite{ClarkeMemoirs} for a minimal set of assumptions.

Alternatively, one can impose a local Lipschitz condition just on the time variable  of the Lagrangian, that we call  here  Condition (S) (see \S~\ref{sect:S}). Condition (S) was known in the smooth setting for providing the validity of the Du Bois-Reymond equation (see \cite{Cesari}). In the nonsmooth setting it became a key assumption for several recent results concerning important aspects such as existence and regularity  of minimizers:
\begin{itemize}
\item Existence: Clarke
introduced in his seminal paper \cite{Clarke1993} the essential idea of using an indirect weak growth condition, named henceforth of type (H), including both  Lagrangians of the form $\L(s,y,u)=\sqrt{1+|u|^2}$, and  superlinear ones. In \cite{Clarke1993} it is shown that Condition (S) with  Condition (H) allow to
 replace the superlinearity assumption in Tonelli's existence theorem  (leaving unchanged lower semicontinuity of the Lagrangian and convexity in the velocity variable), with the advantage that minimizers turn out to be Lipschitz.
\item  Regularity: Condition (S) alone yields the validity of a Du Bois-Reymond (DBR) type condition expressed in terms of convex subdifferentials, without any convexity assumption (see \cite{BM2, BM1}). The fact that (S) is satisfied whenever the Lagrangian is autonomous implies in particular  the validity of the (DBR) condition for any Borel autonomous Lagrangian.  Once Condition (S) is fulfilled, the weak growth condition (H) (alone if $\L$ is real valued)  yields the Lipschitz continuity of the minimizers, when they exist, see \cite{BM2}.
\end{itemize}
Conditions such as (H) and (S) can be rephrased in the context of optimal control, providing Lipschitz regularity of minimizers and boundedness of optimal controls (cf. \cite{CFM}, \cite{FrankTrans}, \cite{BM3}, \cite{MTrans}).

We study here the problem of finding a {\em uniform} Lipschitz constant for minimizers of a Bolza type control problem with variable endpoint of the form
\begin{equation}\min J_{t}(y,u):=\int_t^T\L(s,y(s), u(s))\,ds+g(y(T))\tag{P$_{t,x}$}\end{equation}
\text{\textbf{Subject to:} }
\begin{equation}\label{tag:admissibleintro}\tag{D}\begin{cases}
y\in \AC([t,T];\R^n)\\y'=b(y)u\text{ a.e. } s\in [t,T], \,y(t)=x\\u(s)\in \mathcal U\text{ a.e. } s\in [t,T],\, y(s)\in \mathcal S\,\,\forall s\in [t,T],
\end{cases}
\end{equation}
%
as the initial time $t$ and point $x$ vary on compact sets.  A motivation is the study of the regularity of the value function, when one can assume the existence of an optimal pair for any initial data. This existence hypothesis on minimizers is widespread in the literature and becomes a starting point to derive properties on the value function, see for instance Dal Maso -- Frankowska \cite{DMF} in the autonomous and superlinear case of the calculus of variations
 In the real valued case our main result, Theorem~\ref{th:motivH} below, states that if $\L$ satisfies Condition (S) and a growth condition of type (H), then the minimizers of ($P_{t,x}$) are equi-Lipschitz whenever the $t, x$ belong to a compact set. Furthermore, if one knows an a priori upper bound of the integral terms $\int_t^T\L(s,y_*(s), u_*(s))\,ds$ along
the minimizers, a common Lipschitz rank may be explicitly written.
We shall consider also the case of the extended valued Lagrangians: in this case some further assumptions, namely lower semicontinuity of $\L(s,y,u)$ with respect to $(s,u)$ and a topological property of the effective domain of $\L$, are needed in order to prove the regularity result on minimizers.

 The  growth condition introduced in \S~\ref{subsect:H} represents a violation of the (DBR) condition for high values of the velocity; it
  coincides with Clarke's original one when the compact set is reduced to a single initial datum $(t_0,x_0)$ and the Lagrangian is convex in the velocity variable.  In the case of an extended valued Lagrangian, it is new and includes the class of functions considered in \cite{BM3}, where minimizers regularity is obtained for a single optimal pair without necessarily deriving any kind of uniformity for initial data in a compact set:
the uniform regularity result established here covers the class of Lagrangians that satisfy the assumptions employed for \cite[Theorem 4.2]{BM3}.

  As a byproduct of our formulation, the growth condition (Condition (G), see \S~\ref{sect:G})  introduced by Cellina -- Treu -- Zagatti in \cite{CTZ} and studied in \cite{Cellina,  CF1, MTLip} becomes a particular case of the class of problems considered here.\\
An equi-Lipschitz minimizers regularity was recently established in \cite{MTrans} under the additional assumption that $0<r\mapsto \L(s,y,ru)$ is convex for all $u$ (called `radial convexity'); in our paper we consider problems which may be not necessarily radially convex.

 Moreover, differently from \cite{MTrans},  minimizers may just be local ones in the sense of the absolutely continuous norm. The fundamental tool in the proof of Theorem~\ref{th:motivH} is the Du Bois-Reymond condition established in \cite[Theorem 3.1]{BM3}.

As an application, we extend the local Lipschitz regularity of the value function formulated in \cite{DMF} in the framework of autonomous and  superlinear Lagrangians to the nonautonomous ones under the slower growth condition of type (H).

\section{Preliminaries}
\subsection{Basic setting and notation}
Let $ t< T$ and $x\in\R^n$. We consider the Bolza type \textbf{optimal control problem}
\begin{equation}\min J_{t}(y,u):=\int_t^T\L(s,y(s), u(s))\,ds+g(y(T))\tag{P$_{t,x}$}\end{equation}
\text{\textbf{Subject to:} }
\begin{equation}\label{tag:admissible}\tag{D}\begin{cases}
y\in \AC([t,T];\R^n)\\y'=b(y)u\text{ a.e. } s\in [t,T], \,y(t)=x\\u(s)\in \mathcal U\text{ a.e. } s\in [t,T],\, y(s)\in \mathcal S\,\,\forall s\in [t,T],
\end{cases}
\end{equation}
with the following basic assumptions.

\begin{basicass} The following conditions hold {\rm(}$n, m\ge 1${\rm)}.
\begin{itemize}
\item $t_0<T$ are given real numbers, and $t\in [t_0,T]$;
\item The \textbf{Lagrangian}
$\L:[t_0, T]\times\R^n\times\mathcal \R^m\to  \R \cup\{+\infty\}$, $(s,y,u)\mapsto\L(s,y,u)$  is $Lebesgue-Borel$ measurable (i.e., measurable with respect to the $\mathcal L([t_0,T])\times \mathcal B_{\R^n\times\R^m}$ measurable sets);
\item $b:\R^n\to L(\R^n,\R^m)$ (the space of linear functions from $\R^n$ to $\R^m$) is a Borel measurable function such that, for some $\theta\ge 0$,
    \begin{equation}\label{tag:assb}|b(y)|\le \theta(1+|y|).\end{equation}  We refer to $y'=b(y)u$ as the \textbf{controlled differential equation};
\item The \textbf{control} $u:[t,T]\mapsto\R^m$ is measurable;
\item The state  constraint set $\mathcal S$ is a nonempty subset of $\R^n$;
\item The \textbf{control set} The set $\mathcal U\subset\R^m$ is a cone, i.e. if $u\in \mathcal U$ then $\lambda u\in \mathcal U$ whenever $\lambda>0$.
\item The \textbf{cost function}
$g:\R^n\to \R\cup \{+\infty\}$ is not identically equal to $+\infty$.
    \item \textbf{(Linear growth from below)} There are $\alpha>0$ and $d\ge 0$ satisfying, for a.e. $s\in [t_0,T]$ and every $y\in\R^n, u\in \mathcal U$,
\begin{equation}\label{tag:lingrowth}\L(s,y,{u})\ge \alpha|{u}|-d.\end{equation}
\end{itemize}
\end{basicass}\noindent
An \textbf{admissible pair} for {\rm (P}$_{t,x}${\rm )} is a pair of functions $(y,u):[t,T]\to\R^n\times\R^m$ with $u$ measurable, $(y,u)$ satisfying \eqref{tag:admissible} and such that $J_t(y,u)<+\infty$.
We assume henceforth that, for each $t\in [t_0, T]$ and $x\in \mathcal S$,  there exists at least an admissible pair  for {\rm (P}$_{t,x}${\rm )}.\\
\noindent
\\
Notice, that in the particular case where  the function  $b\equiv 1$ in the controlled differential equation,  then {\rm (P}$_{t,x}${\rm )} becomes a problem of the \textbf{Calculus of Variations}.\\
If $z\in\R^k$ we shall denote by $B^k_{r}(z)$ (simply $B_r^k$ if $z=0$) the closed ball  of center $z$ and radius $r$ in $\R^k$. The norm in $L^1$ is denoted by $\|\cdot\|_1$, and the norm in $L^{\infty}$ by $\|\cdot\|_{\infty}$.\\
\subsection{Condition {\rm {\rm(S)}}}\label{sect:S}
We will consider the following local Lipschitz condition on the Lagrangian $\L$ with respect to the time variable.
\begin{conditionS}
There are $\kappa,  A\ge 0, \gamma\in L^1([t_0,T])$,
$\varepsilon_*>0$ satisfying,
for a.e.  $s\in [t_0,T]$
 \begin{equation}\label{tag:H3}
|\Lambda(s_2,y,u)-\Lambda(s_1,y,u)|\le \big(\kappa\L(s,y,u)+A|u|+\gamma(s)\big)\,|s_2-s_1|
\end{equation}
whenever $s_1,s_2\in [s-\varepsilon_*,s+\varepsilon_*]\cap [t_0, T]$, $y\in\R^n$, $u\in\R^m$, are such that $(s_1,y,u), (s_2,y,u)\in\Dom(\Lambda)$.
\end{conditionS}
\begin{remark}\label{rem:auton55}
Condition {\rm (S)} is satisfied if $\L(s,y,u)=\L(y,u)$ is {\em autonomous}. Indeed in that case \eqref{tag:H3} holds with $\kappa=A=0$,$\gamma\equiv 0$ and $\varepsilon_*=T$.
\end{remark}
%


\section{Growth conditions}
The definitions and results in this section are similar to those ones which have been introduced in some recent papers (see \cite{BM2, BM3} and \cite{MTrans}). There are however some differences: the present definition of Condition (H$_B^{\delta}(\chi)$) is more general than the corresponding growth condition used in \cite{BM2, BM3}, and we do not require, as in \cite{MTrans}, that the Lagrangian is radially convex in the control variable. Therefore, the detailed proofs are reported below for the convenience of the reader.
\subsection{Partial derivatives and subgradients}
In what follows we often deal with  subdifferentials in the sense of convex analysis.
\\
\begin{Notation}
If $(s,y,u)\in \Dom (\L)$, we shall denote by
\begin{itemize}
\item  $\partial_{\mu}\left(\L\Big(s,y,\dfrac{{u}}{\mu}\Big){\mu}\right)_{{\mu}=1}$ the \textbf{convex subdifferential}  of the map  \[0<{\mu}\mapsto \L\Big(s,y,\dfrac{{u}}{\mu}\Big){\mu}\] at ${\mu}=1$;
    \item
 $\partial_r\L\big(s,y,r{u}\big)_{r=1}$ the \textbf{convex subdifferential}  of the map  \[0<r\mapsto \L(s,y,r{u})\] at $r=1$;
 \item
 $\nabla_u\L(s,y,u)$ the \textbf{gradient} of $\L(s,y,\cdot)$ at  $u$. If $\L(s,y,\cdot)$ is differentiable then the (classical) derivative of $\L$ w.r.t. $u$ is written $D_u\L(s,y,u)=u\cdot\nabla_u\L(s,y,u)$.
 \end{itemize}
\end{Notation}

\begin{remark}\label{rem:variediff} Let $(s,y,u)\in \Dom(\L)$.
A simple change of variable $r=\dfrac1{\mu}$ shows
that
\[p\in \partial_{\mu}\Big(\L\Big(s,y,\dfrac{{u}}{\mu}\Big){\mu}\Big)_{{\mu}=1}\Leftrightarrow
\Lambda(s,y,{u})-p\in \partial_r\L\big(s,y,r{u}\big)_{r=1}.\]
\end{remark}
The  growth assumptions introduced below involve some uniform limits.
\subsection{The Growth Condition {\rm (G)}}\label{sect:G}
The growth Condition (G)  was thoroughly studied by Cellina and his school for autonomous Lagrangians of the calculus of variations that are smooth or convex in the velocity variable. The extension to the radial convex case, recalled here, was considered in \cite{MTLip} in the autonomous case and was introduced in \cite{BM1, BM3} for the nonautonomous case.
\begin{GrowthGloc} We say that $\L$ satisfies {\rm (G)} if, for all $K\ge 0$,
\begin{equation}\label{tag:GequivP}
\lim_{\substack{|{u}|\to +\infty\\ (s,y,u)\in\,\Dom(\L),\, u\in \mathcal U\\ P(s,y,{u})\in \partial_{\mu}(\L(s,z,\frac{{u}}{\mu}){\mu})_{{\mu}=1}
\not=\emptyset}}\!\!\!\!\!\!P(s,y,{u})=-\infty\,\,\,\text{unif.  } |y|\le K,\end{equation}
meaning that
for all $M\in\R$ there exists $R>0$ such that
$P(s,y,u)\le M$ for all $(s,y,u)\in\Dom(\L)$ with $\partial_{\mu}(\L(s,z,\frac{{u}}{\mu}){\mu})_{{\mu}=1}
\not=\emptyset$, $|y|\le K,\, u\in \mathcal U,\, |u|\ge R.$
\end{GrowthGloc}
\begin{remark}\label{rem:derivative}
\begin{enumerate}
\item If $u\mapsto\L(s,y,u)$ is differentiable, \eqref{tag:GequivP} becomes
\begin{equation}\label{tag:Gdiff33}\displaystyle\lim_{\substack{|{u}|\to +\infty\\ (s,y,u)\in\,\Dom(\L),\, u\in \mathcal U\\ \partial_{r}\L(s,z,ru)_{r=1}
\not=\emptyset}}\!\!\!\!\L(s,y,u)-u\cdot \nabla_{u}\L(s,y,u)=-\infty \text{ unif. }|y|\le K.\end{equation}
\end{enumerate}
\end{remark}

Superlinearity plays a key role in Tonelli's existence theorem. It has been widely used as a sufficient condition for Lipschitz regularity of minimizers.
\begin{AssumptionG}
There exists $\Theta:]-\infty, +\infty[\to \R$ such that, for a.e.  $s\in [t_0,T]$ and every $y\in \R^n$ $u\in\mathcal U$,
\[\Lambda(s,y,{u})\ge\Theta(|{u}|)\quad\forall {u}\in \R^n,\quad
\displaystyle\lim_{r\to +\infty}\dfrac{\Theta(r)}{r}=+\infty .\tag{G$_\Theta$}\label{tag:superlinearity}
\]
\end{AssumptionG}
Superlinearity, together with some local boundedness condition, implies the validity of the growth Condition {\rm (G)}. We refer to \cite[Proposition 2 and Remark 11]{BM2} for the proof of the following result.
\begin{proposition}[\textbf{Superlinearity  $\Rightarrow$ {\rm \textbf{(G)}}}]\label{prop:SuperimpliesG}Let $\L$ be superlinear,  and assume that
there is $r_0>0$ such that $(s,y,u)\in\Dom(\L)$ whenever $s\in [t_0,T], y\in\R^n$ and $u\in\R^m$ with $|u|= r_0$.
 Then
$\L$ satisfies
Assumption {\rm (G)}.
\end{proposition}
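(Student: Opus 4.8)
The plan is to unwind the definition of Condition~(G): I must show that for every $K\ge 0$ and every $M\in\R$ there is $R>0$ such that, for all $(s,y,u)\in\Dom(\L)$ with $|y|\le K$, $u\in\mathcal U$, $|u|\ge R$, and $P(s,y,u)\in\partial_\mu\big(\L(s,y,\tfrac u\mu)\mu\big)_{\mu=1}\ne\emptyset$, one has $P(s,y,u)\le M$. Fix such $(s,y,u)$ and such a subgradient element $P=P(s,y,u)$. The key point is the subgradient inequality for the convex function $\varphi(\mu):=\L(s,y,\tfrac u\mu)\mu$ at $\mu=1$: for every admissible $\mu>0$,
\[
\varphi(\mu)\ge \varphi(1)+P(\mu-1),\qquad\text{i.e.}\qquad \L\Big(s,y,\tfrac u\mu\Big)\mu\ge \L(s,y,u)+P(\mu-1).
\]
Here I must be careful that $\varphi$ is only defined (finite) for those $\mu$ with $(s,y,\tfrac u\mu)\in\Dom(\L)$; this is exactly where the hypothesis ``there is $r_0>0$ such that $(s,y,u)\in\Dom(\L)$ whenever $|u|=r_0$'' enters. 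Given $u$ with $|u|\ge R$ (to be chosen $\ge r_0$), I pick $\mu:=|u|/r_0\ge 1$ so that $|u/\mu|=r_0$, hence $(s,y,u/\mu)\in\Dom(\L)$ and $\varphi(\mu)$ is finite; the subgradient inequality then applies at this $\mu$.

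Substituting, and using the linear growth from below \eqref{tag:lingrowth} to bound $\L(s,y,u)\ge \alpha|u|-d$ on the right, and the superlinear minorant \eqref{tag:superlinearity}, namely $\L(s,y,u/\mu)\ge \Theta(|u/\mu|)=\Theta(r_0)$, on the left, I get
\[
\Theta(r_0)\,\tfrac{|u|}{r_0}\ \ge\ \alpha|u|-d+P\Big(\tfrac{|u|}{r_0}-1\Big),
\]
so that, solving for $P$ (note $\tfrac{|u|}{r_0}-1>0$ once $|u|>r_0$),
\[
P\ \le\ \frac{\Theta(r_0)\tfrac{|u|}{r_0}-\alpha|u|+d}{\tfrac{|u|}{r_0}-1}
\ =\ \frac{\big(\Theta(r_0)-\alpha r_0\big)\tfrac{|u|}{r_0}+d}{\tfrac{|u|}{r_0}-1}.
\]
As $|u|\to+\infty$ the right-hand side tends to $\Theta(r_0)-\alpha r_0$, which is a fixed finite number independent of $s$, $y$ and $u$. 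This already bounds $P$ from above uniformly — but it need not be $\le M$ for arbitrary $M$. To push the bound down to $-\infty$, I replace the crude minorant $\Theta(r_0)$ by exploiting superlinearity at a larger radius: instead of $r_0$, for a given threshold I choose the scaling so that $|u/\mu|=\rho$ for a well-chosen large constant $\rho$ (still needing $(s,y,u/\mu)\in\Dom$, which forces me to keep $\rho=r_0$ unless $\Dom$ is larger).

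The cleaner route, and the one I expect the paper to take via \cite[Proposition 2, Remark 11]{BM2}, is the symmetric one: apply the subgradient inequality at a small $\mu<1$. Writing $r=1/\mu$, Remark~\ref{rem:variediff} converts $P\in\partial_\mu(\L(s,y,\tfrac u\mu)\mu)_{\mu=1}$ into $\L(s,y,u)-P\in\partial_r\L(s,y,ru)_{r=1}$; the subgradient inequality for the convex function $r\mapsto\L(s,y,ru)$ at $r=1$ gives, for all admissible $r$,
\[
\L(s,y,ru)\ \ge\ \L(s,y,u)+\big(\L(s,y,u)-P\big)(r-1),
\]
hence
\[
P\ \le\ \L(s,y,u)\,r - \L(s,y,ru)\ \le\ \L(s,y,u)\,r-\Theta(r|u|).
\]
Now the only $r$ I am entitled to use is the one making $|ru|=r_0$, i.e. $r=r_0/|u|<1$ for $|u|>r_0$; then $\L(s,y,ru)\ge\Theta(r_0)$ and $\L(s,y,u)\,r=\L(s,y,u)\,r_0/|u|$. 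Using superlinearity in the form: for every $N$ there is $C_N$ with $\Theta(t)\ge Nt-C_N$ for all $t$ (consequence of $\Theta(t)/t\to+\infty$), one bounds $\L(s,y,u)\ge\Theta(|u|)\ge N|u|-C_N$ from below; but here $\L(s,y,u)$ appears with a positive coefficient, so what is needed is an \emph{upper} control, and that is exactly why one wants $r<1$: $\L(s,y,u)r = \L(s,y,u)\,r_0/|u|$, and although $\L(s,y,u)$ may be large, dividing by $|u|$ and recalling that $\L(s,y,u)$ enters the minimization only along the optimal pair is not available at this abstract level — instead one estimates $\L(s,y,u)\,r_0/|u|$ crudely by noting that, since $P$ is finite, $\L(s,y,u)$ is finite, but one needs a quantitative bound.

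The resolution — and the step I expect to be the genuine obstacle, to be handled as in \cite{BM2} — is to use the subgradient inequality in \emph{both} directions simultaneously: from the inequality at $r=r_0/|u|$ one also gets a lower bound on $\L(s,y,u)$ that, combined with \eqref{tag:lingrowth}, pins $\L(s,y,u)$ between $\alpha|u|-d$ and something comparable to $(|u|/r_0)\Theta(r_0)$; feeding the \emph{upper} bound $\L(s,y,u)\le \tfrac{|u|}{r_0}\Theta(r_0)+\text{(lower-order)}$ back into $P\le \L(s,y,u)\,r_0/|u|-\Theta(r_0)$ yields $P\le \Theta(r_0)-\Theta(r_0)=0$ plus lower-order terms — still not $-\infty$. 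To get $-\infty$ one must genuinely use that $\Theta(r)/r\to+\infty$: choose the comparison radius to be $\rho$ with $\rho\to\infty$ as $|u|\to\infty$ (for instance $\rho=\sqrt{|u|\,r_0}$), which is legitimate only if $(s,y,\rho u/|u|)\in\Dom(\L)$; the hypothesis gives this for $\rho=r_0$ only, so one first argues that the Lagrangian is finite on a full neighbourhood radius by a continuity/convexity-in-$r$ argument along the segment from $r_0$-sphere to $u$, and then the term $\Theta(\rho)/\rho\to+\infty$ drives $P\to-\infty$ uniformly in $|y|\le K$, $s$, $u\in\mathcal U$. I would structure the write-up as: (i) reduce to the subgradient inequality via Remark~\ref{rem:variediff}; (ii) record the two-sided bound on $\L(s,y,u)$; (iii) optimize the choice of comparison radius using $(G_\Theta)$; (iv) conclude uniformity. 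The delicate bookkeeping is entirely in (iii), ensuring the chosen radius stays in $\Dom(\L)$ — precisely the role of the hypothesis on $r_0$.
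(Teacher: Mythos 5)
Your proposal does not close, and the obstacle you describe as ``the genuine obstacle'' is an artifact of a sign error in the key subgradient manipulation. (Note also that the paper itself gives no proof of Proposition~\ref{prop:SuperimpliesG}: it refers to \cite{BM2}, so your argument has to stand on its own.) Writing $Q=\L(s,y,u)-P\in\partial_r\L(s,y,ru)_{r=1}$ as in Remark~\ref{rem:variediff}, the subgradient inequality $\L(s,y,ru)\ge\L(s,y,u)+Q(r-1)$ for $0<r<1$ gives
\begin{equation}
P\,(1-r)\ \le\ \L(s,y,ru)-r\,\L(s,y,u),\qquad\text{i.e.}\qquad P\ \le\ \frac{\L(s,y,ru)-r\,\L(s,y,u)}{1-r},
\end{equation}
and \emph{not} $P\le r\,\L(s,y,u)-\L(s,y,ru)$ as you wrote. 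In the correct inequality $\L(s,y,u)$ enters with a negative coefficient, so the superlinear lower bound $\L(s,y,u)\ge\Theta(|u|)$ is exactly the ingredient needed: taking $r=r_0/|u|$ (so that $|ru|=r_0$ and $(s,y,ru)\in\Dom(\L)$ by hypothesis) yields
\begin{equation}
P\ \le\ \frac{\L\big(s,y,r_0\tfrac{u}{|u|}\big)-r_0\,\tfrac{\Theta(|u|)}{|u|}}{1-\tfrac{r_0}{|u|}}\ \longrightarrow\ -\infty
\end{equation}
uniformly for $|y|\le K$, $s\in[t_0,T]$, \emph{provided} $\L$ is bounded above on $\{(s,y,v):\,s\in[t_0,T],\,|y|\le K,\,|v|=r_0\}$ — this is the ``local boundedness'' the paper alludes to just before the statement and which is part of the hypotheses in \cite{BM2}; mere finiteness on the sphere, which is all you use, cannot give uniformity in $y$. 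Because of the flipped sign you concluded you needed an \emph{upper} bound on $\L(s,y,u)$ itself and hence a comparison radius $\rho\to\infty$; that detour is both unnecessary and unjustified: the ``two-sided bound'' on $\L(s,y,u)$ you invoke does not exist without radial convexity (which this paper explicitly does not assume), and the ``continuity/convexity-in-$r$ argument'' you propose to reach radius $\sqrt{|u|r_0}$ has no basis (incidentally, the standing star-shapedness assumption \eqref{tag:starshaped} already puts $(s,y,\rho u/|u|)$ in $\Dom(\L)$ for every $\rho\le|u|$, since $(s,y,u)\in\Dom(\L)$ — a point you missed while worrying about admissibility).

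Two further defects: in your first route, replacing $\L(s,y,u/\mu)$ by its lower bound $\Theta(r_0)$ on the \emph{larger} side of the inequality $\L(s,y,u/\mu)\mu\ge\L(s,y,u)+P(\mu-1)$ is logically invalid (and, done correctly with an upper bound $C_K$ on the $r_0$-sphere, that route also gives $P\le\big(C_K\tfrac{|u|}{r_0}-\Theta(|u|)\big)/\big(\tfrac{|u|}{r_0}-1\big)\to-\infty$, so both routes work once the directions are fixed); and your write-up ends with a plan whose crucial step (iii) is exactly the unresolved, misdiagnosed one, so no proof is actually produced. In short: fix the inequality direction, use a single fixed comparison radius $r_0$ together with a uniform upper bound for $\L$ on that sphere over $|y|\le K$, and let superlinearity act at the large argument $|u|$; that three-line argument is essentially the one of \cite{BM2} to which the paper appeals.
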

\subsection{Assumptions on $\Dom(\L)$ and distance-like functions}
 The  {\bf effective domain} of $\L$, given by
   \[\Dom (\L):=\{(s,y,u):\, \L(s,y,u)<+\infty\}.\]
We assume that for a.e. $s\in [t_0, T]$ and every $y\in\R^n$ the set \[\{u\in\R^n:\,(s,y,u)\in\Dom (\L)\}\]    is \textbf{strictly star-shaped in the variable $u$ w.r.t.  the origin}, i.e.,
\begin{equation}\L(s,y,{u})<+\infty, \, 0<r\le 1\Rightarrow \L(s,y,r{u})<+\infty.\label{tag:starshaped}\end{equation}
\begin{definition}[$u$-distance, $\infty$-distance, Euclidean distance]\label{def:wellinside}\phantom{AA}
\begin{itemize}
\item We shall denote by $\dist_e$ the usual \textbf{Euclidean distance} in $[t_0, T]\times \R^n\times\R^m$.
\item
The \textbf{infinity distance} $\dist_{\infty}$ is defined
for all $\omega_i=(s_i, z_i, v_i) \in [t_0, T]\times \R^n\times\R^m\,(i=1,2)$,
\[
\dist_{\infty}(\omega_1, \omega_2)=\begin{cases} +\infty&\text{ if }\omega_1\not=\omega_2\\
0&\text{ if }\omega_1=\omega_2.\end{cases}\]
\item
The \textbf{$u$-distance} is the function defined on the pairs of points $\omega_1=(s_1, z_1, v_1), \omega_2=(s_2, z_2, v_2)\in [t_0, T]\times \R^n\times\R^m$ such that $(s_1, z_1)=(s_2, z_2)$ by
\[
\dist_{\infty}(\omega_1, \omega_2)=|v_2-v_1|.\]
\end{itemize}
If $\chi\in\{e, u, \infty\}$ and $(s,z,v)\in\Dom(\L)$ we set $\dist_{\chi}((s,z,v),\Dom(\L)^c)$ to be equal to
\[\inf\{\dist_{\chi}((s,z,v), \omega):\, \omega\in ([t_0,T]\times\R^n\times\R^m)\setminus\Dom(\L)\}.\]
\end{definition}
\begin{remark}\label{rem:triangular}
Differently from the Euclidean and infinity distances, the $u$-distance is not a metric on $[t_0, T]\times\R^n\times\R^m$. We point out, however, that as well as $\dist_e$ and $\dist_{\infty}$, $\dist_u$ satisfies the triangular inequality among triples of  points that have the same first two coordinates.  Notice also that if $\chi\in\{e,u\}$ then
\[\begin{aligned}\dist_{\chi}((s,y,u), \Dom(\L)^c)&=\dist_{\chi}((s,y,u), \partial\Dom(\L))\\&:=\inf\{\dist_{\chi}((s,z,v), \omega):\, \omega\in \partial\Dom(\L)\}.\end{aligned}\]
The above  is no more true if $\chi=\infty$ and $\partial\Dom(\L)\cap \Dom(\L)\not=\emptyset$.
\end{remark}
\begin{definition}[Well-inside $\Dom(\L)$ for $\dist_{\chi}, \chi\in\{e,u, \infty\}$]
 We say that a subset $A$ of $\Dom(\L)$ is
\textbf{well-inside $\Dom(\L)$} w.r.t. $\dist_{\chi} (\chi\in\{e, u, \infty\}$) if it is contained in  $\{(s,y,u)\in\Dom(\L):\, \dist_{\chi}((s,y,u),\,\Dom(\L)^c)\ge \rho\}$, for a suitable $\rho>0$.
\begin{itemize}
\item If $\chi=e$ the above means that for all $(s,y,u)\in A$, the open ball
 of radius $\rho$ in $I\times \R^n\times \R^m$ and center in $(s,y,u)$ is contained in $\Dom(\L)$;
\item If   $\chi=u$ the above means that
\[(s,y,u)\in A,\, 0<r<\rho\Rightarrow (s,y,u+r u)\in \Dom(\L).\]
\item If $\chi=\infty$ the above means simply that $A\subset \Dom(\L)$.
\end{itemize}
\end{definition}
\begin{remark}
Notice that, if $\omega:=(s,y,u)\in\Dom(\L)$ and $F:=\Dom(\L)^c$, then
\begin{equation}\label{tag:ineq}\dist_{e}(\omega, F)\le \dist_{u}(\omega, F)\le \dist_{\infty}(\omega, F).\end{equation}
Thus, if $\mathcal M_{\chi}$ is the class of sets that are well inside $\Dom(\L)$ w.r.t. $\dist_{\chi}$ we have
\begin{equation}\label{tag:inclusions}
\mathcal M_{e}\subset \mathcal M_{u}\subset \mathcal M_{\infty}.
\end{equation}
\end{remark}
\begin{example} Let $\L$ be autonomous and $\Dom(\L)=\{(y,u)\in\R^2:\, |y|\le 1\}$. Then the set $\{(y,u)\in\R^2:\, |y|\le 1, |u|\le 1\}$ is well-inside $\Dom(\L)$ w.r.t. to $d_u$ but not w.r.t. $d_e$.
\end{example}
\subsection{Growth Condition {\rm(H$_B^{\delta}(\chi)$)}}\label{subsect:H}
Let $\delta\in[t_0,T[$.
The number $B$ represents an upper bound of the integral term in (P$_{t,x}$) for a prescribed family of admissible pairs, with initial time $t$ varying in $[t_0,\delta]$. The following quantities $c_{t}(B)$ and $\Phi(B)$ will play a role in the proof of the main results.
\begin{definition}[\textbf{$c_{t}(B)$ and $\Phi(B)$}]\label{def:cphi}
Let $t\in [t_0,T[$, $B\ge  0$ and
assume the linear growth from below \eqref{tag:lingrowth}, i.e., for a.e. $s\in [t_0,T]$, for all $y\in\R^n, u\in\mathcal U$,
 \[\Lambda(s,y,u)\ge \alpha|{u}|-d\quad (\alpha>0, d\ge 0).\]
 Set
\[c_{t}(B):=\dfrac{B+d(T-t)}{\alpha\,(T-t)}.\]
Moreover, if   Condition {\rm (S)} holds, we  define
\begin{equation}
\Phi(B):=\kappa B+\dfrac A{\alpha}(B+d\,(T-t_0))+\|\gamma\|_{1},
\end{equation}
where we set $\kappa, A, \gamma$ equal to 0 if $\L$ is {\em autonomous}.
\end{definition}
\begin{remark}
Notice that, in Definition~\ref{def:cphi}, $c_t(B)\le c_{\delta}(B)$ for all $t\in [t_0,\delta]$. In the autonomous case, since $\kappa, A$ and $\gamma$ may be chosen to be equal to 0, we consider $\Phi(B):=0$ (see Remark~\ref{rem:auton55}).
\end{remark}
The next result highlights the roles  of $\Phi(B)$ and $c_t{\rm (B)}$, we refer to \cite[Proposition 4.10]{MTrans} for a proof.
\begin{proposition}[\textbf{The role  of $\phi{\rm (B)}$ and $c_t{\rm (B)}$}]\label{lemma:newC}
 Assume the linear growth from below \eqref{tag:lingrowth} and the validity of Condition {\rm (S)}.
Let $t\in[t_0,T[$, $x\in\R^n$, and take an admissible pair $(y,u)$ for {\rm (P}$_{t,x}${\rm )} with $\displaystyle\int_t^T\L(s,y(s), u(s))\,ds\le B$ for some $B\ge 0$.
Then
\begin{enumerate}
\item
\begin{equation}\label{tag:100}\int_t^T|u(s)|\,ds\le \dfrac{B+d(T-t)}{\alpha}=(T-t)c_t(B).\end{equation}
\item For every $c>c_{t}(B)$  the set
    $\{s\in [t,T]:\, |u(s)|<c\}$
    is non negligible.
\item
$
\displaystyle\int_t^T\big\{\kappa\L(s,y(s),u(s))+A|u(s)|+\gamma(s)\big\}\,ds\le \Phi(B).
$
\end{enumerate}
\end{proposition}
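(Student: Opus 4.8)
The plan is to prove each of the three assertions in Proposition~\ref{lemma:newC} by elementary estimates, using the linear growth from below \eqref{tag:lingrowth} and Condition {\rm (S)}.

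\textbf{Proof of (1).} Integrating \eqref{tag:lingrowth} along the admissible pair $(y,u)$ over $[t,T]$ and using $u(s)\in\mathcal U$ a.e., I would write
\[
B\ge \int_t^T\L(s,y(s),u(s))\,ds\ge \int_t^T\big(\alpha|u(s)|-d\big)\,ds=\alpha\int_t^T|u(s)|\,ds-d(T-t).
\]
Solving for $\int_t^T|u(s)|\,ds$ gives $\int_t^T|u(s)|\,ds\le \dfrac{B+d(T-t)}{\alpha}=(T-t)c_t(B)$, which is \eqref{tag:100}.

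\textbf{Proof of (2).} This is an averaging argument. If the set $E_c:=\{s\in[t,T]:\,|u(s)|<c\}$ were negligible, then $|u(s)|\ge c$ for a.e.\ $s\in[t,T]$, so $\int_t^T|u(s)|\,ds\ge c(T-t)$. Combined with part (1) this forces $c(T-t)\le (T-t)c_t(B)$, i.e.\ $c\le c_t(B)$, contradicting $c>c_t(B)$. Hence $E_c$ is non-negligible.

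\textbf{Proof of (3).} Here I would integrate the three terms separately over $[t,T]\subset[t_0,T]$. The term $\int_t^T\kappa\L(s,y(s),u(s))\,ds\le \kappa B$ by hypothesis. For the middle term, part (1) gives $\int_t^T A|u(s)|\,ds=A\int_t^T|u(s)|\,ds\le \dfrac A\alpha\big(B+d(T-t)\big)\le \dfrac A\alpha\big(B+d(T-t_0)\big)$, since $t\ge t_0$. For the last term, $\int_t^T\gamma(s)\,ds\le\int_{t_0}^T|\gamma(s)|\,ds=\|\gamma\|_1$ (one may assume $\gamma\ge 0$, or replace $\gamma$ by $|\gamma|$). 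Summing the three bounds yields
\[
\int_t^T\big\{\kappa\L(s,y(s),u(s))+A|u(s)|+\gamma(s)\big\}\,ds\le \kappa B+\frac A\alpha\big(B+d(T-t_0)\big)+\|\gamma\|_1=\Phi(B).
\]
In the autonomous case one takes $\kappa=A=0$, $\gamma\equiv 0$ by Remark~\ref{rem:auton55}, so $\Phi(B)=0$ and the inequality is trivial. None of the steps presents a genuine obstacle; the only point requiring mild care is the sign conventions on $\gamma$ and the use of $t\ge t_0$ to replace $T-t$ by $T-t_0$ in the definition of $\Phi(B)$, so that the bound is uniform in $t\in[t_0,\delta]$.
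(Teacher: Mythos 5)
Your proof is correct: part (1) is the direct integration of the linear growth bound \eqref{tag:lingrowth} (valid since $u(s)\in\mathcal U$ a.e.), part (2) is the standard averaging/contradiction argument using $T-t>0$, and part (3) combines the hypothesis $\int_t^T\L\le B$, part (1), $T-t\le T-t_0$, $d\ge 0$, and $\int_t^T\gamma\le\|\gamma\|_1$ to recover exactly the definition of $\Phi(B)$. The paper does not reproduce a proof but defers to \cite[Proposition 4.10]{MTrans}; your elementary argument is precisely the intended one, and your remark on the sign of $\gamma$ is harmless since the bound by $\|\gamma\|_1$ holds regardless.
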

Given $B\ge 0$ and $\delta\in [t_0, T[$, the growth Condition (H$_B^{\delta}(\chi)$) below requires the validity of Condition {\rm (S)}, unless $\L$ is autonomous. It will be applied when $B$ is an upper bound for the values of a given set of admissible pairs for problems {\rm (P}$_{t,x}${\rm )} as $t\in [t_0, \delta]$. \\
Condition {\rm (H$_{B}^{\delta}(\chi)$)} is a refinement of \cite[Condition (H)]{Clarke1993}, introduced by Clarke, who first thoroughly began the investigation on existence and regularity under such a kind of indirect weak growth condition.\\
Below, taking the inf/sup where $P(s,y,{u})\in \partial_{\mu}(\L(s,z,\frac{{u}}{\mu}){\mu})_{{\mu}=1}
\not=\emptyset$   means that we consider just those points $(s,y,u)$ such that $\partial_{\mu}(\L(s,z,\frac{{u}}{\mu}){\mu})_{{\mu}=1}
\not=\emptyset$.
\begin{ConditionH}
Assume that $\L$ satisfies Condition {\rm (S)} and let $\chi\in\{e,u, \infty\}$.
Let $B\ge 0$ and $\delta\in [t_0,T[$.
We say that $\L$ satisfies {\rm (H$_{B}^{\delta}(\chi)$)} if for all $K\ge 0$,
  there are $\overline\nu>0$  and ${c}>c_{\delta}(B)$
  satisfying, for all $\rho>0$,
\begin{equation}\label{tag:Hequiv}
\sup_{\substack{s\in [t_0,T],|y|\le K\\|u|\ge \overline\nu, u\in \mathcal U\\ \L(s, y, u)<+\infty\\P(s,y,{u})\in \partial_{\mu}(\L(s,z,\frac{{u}}{\mu}){\mu})_{{\mu}=1}
\not=\emptyset}} \!\!\!\!\! \!\!\!\!\!
\{P(s,y,u)\}+\Phi(B)< \!\!\!\!\! \!\!\!\!\!\inf_{\substack{s\in [t_0,T],|y|\le K\\|u|<c, u\in \mathcal U\\ \L(s, y, u)<+\infty\\ \dist_{\chi}((s,y,u),\Dom(\L)^c)\ge\rho\\P(s,y,{u})\in \partial_{\mu}(\L(s,z,\frac{{u}}{\mu}){\mu})_{{\mu}=1}
\not=\emptyset}}
 \!\!\!\!\! P(s,y,u).\end{equation}
 %
\end{ConditionH}
\begin{remark} Condition (H$_{B}^{\delta}(\chi)$) was originally introduced with $\delta=t_0$ and $\chi=\infty$ in \cite{Clarke1993}, and subsequently considered in \cite{BM2} and no interest in investigating the uniformity of the Lipschitz constant of the minimizers as the initial time and datum vary.  Considering $\chi=e$ or $\chi=u$ enlarge the class of extended valued Lagrangians that satisfy (H$_{B}^{\delta}(\chi)$). Notice, in view of  \eqref{tag:inclusions}, that from \eqref{tag:Hequiv} we have
\[(H_{B}^{\delta}(\infty))\Rightarrow (H_{B}^{\delta}(u))\Rightarrow(H_{B}^{\delta}(e)).\]
We refer to \cite[Example 4.18]{MTrans} for a Lagrangian that satisfies $(H_{B}^{\delta}(e))$ but not $(H_{B}^{\delta}(\infty))$.
\end{remark}




\begin{remark}\label{rem:HH}
\begin{enumerate}
 \item The validity of Condition (H$_B^{\delta}(\chi)$)  implies that the right side of inequality \eqref{tag:Hequiv} is not equal to $-\infty$.
        \item If $u\mapsto\L(s,y,u)$ is differentiable, \eqref{tag:Hequiv} may be rewritten 
  as
\begin{multline}\label{tag:Hdiff2}
\!\!\!\!\!\!\!\!\!\!\!\!\!\!\!\!\sup_{\substack{s\in [t_0,T],|y|\le K\\|u|\ge \overline\nu, u\in \mathcal U\\ \L(s, y, u)<+\infty\\ \partial_{\mu}(\L(s,z,\frac{{u}}{\mu}){\mu})_{{\mu}=1}
\not=\emptyset}}
\!\!\!\!\!\!\!\!\!\!\!\!\!\!\!\!\{\L(s,y,u)-u\cdot\nabla_u\L(s,y,u)\}+\Phi(B)<
\\
<\inf_{\substack{s\in [t_0,T],|y|\le K\\|u|<c, u\in \mathcal U\\ \L(s, y, u)<+\infty\\ \dist_{\chi}((s,y,u),\Dom(\L)^c)\ge\rho\\ \partial_{\mu}(\L(s,z,\frac{{u}}{\mu}){\mu})_{{\mu}=1}
\not=\emptyset}}
\!\!\!\!\!\!\!\!\!\!\!\!\!\!\!\!\!\!\!\{\L(s,y,u)-u\cdot\nabla_u\L(s,y,u)\}.\end{multline}
    \end{enumerate}
\end{remark}
\begin{remark}[\textbf{Interpretation of  {\rm (G)}  and of {\rm (H$_B^{\delta}(\chi)$)}}]\label{rem:equivG} Consider for simplicity a Lagrangian $\L(u)$ of the variable $u$. Let $\L({u})<+\infty$ and assume that
\[P(u)\in\partial_{\mu}\Big(\L\Big( \dfrac{{u}}{\mu}\Big ){\mu}\Big)_{{\mu}=1}\not=\emptyset.\] Then $P(u)=\L(u)-Q(u)$ for some
 $Q({u})\in  \partial_{r}\L(r{u})_{r=1}$.
  Notice that
\[\L(r{u})\ge\phi_u(r):=\L({u})+ Q({u})(r-1)\quad \forall r>0.\]
 The value
$\phi_u(0)=P({u}):=\L({u})- Q({u})$ represents the intersection of the ``tangent'' line $z=\phi_u(r)$ to $0<r\mapsto \L(r{u})$ at $r=1$ with the $z$ axis.\\
Condition (G)  thus means that the ordinate $P( {u})$ of the above intersection point tends to $-\infty$ as $|{u}|$ goes to $\infty$.
\\
Condition {\rm (H$_B^{\delta}(\chi)$)} means that there is  a gap of  at least $\Phi(B)$ between the  above points as $|u|\ge\overline\nu$ and when evaluated at $u$ such that $|u|<c$, more precisely that
\[\sup_{|u|\ge\overline\nu}P(u)+\Phi(B)<\inf_{|u|<c}P(u).\]

\begin{figure}[htbp]
\begin{center}
\includegraphics[width=0.60\textwidth]{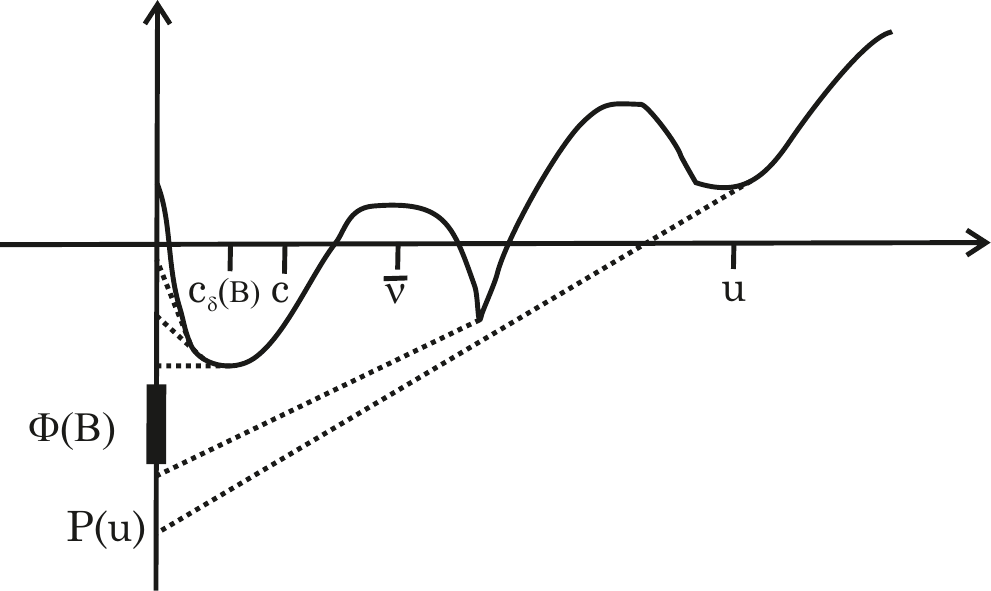}\
\end{center}
\caption{Condition {\rm (H$_{B}^{\delta}$)}}
\label{fig:G}
\end{figure}
\end{remark}
The validity of Condition (H$_B^{\delta}(\chi)$) implies that the infimum (resp. the sup) involved in \eqref{tag:Hequiv} is not equal to $-\infty$ (resp. $+\infty$). These facts, actually, occur quite often, independently of Condition (H$_B^{\delta}(\chi)$): their validity is actually a slow growth Condition, it was  introduced and named  (M$_B^{\delta}$) in \cite{MTrans}. Claim 2) of Proposition~\ref{prop:supinf} improves the sufficient condition formulated in \cite[Proposition 4.24]{MTrans}.
\begin{proposition}\label{prop:supinf} Let $K\ge 0$.
\begin{enumerate}
\item Assume that
 $\L$ is bounded on the bounded sets that are \textbf{\textbf{well-inside $\Dom(\L)$}} w.r.t. $\dist_{\chi} (\chi\in\{e, u, \infty\})$. For any  $c, \rho>0$,
\begin{equation}\label{tag:Pbound}-\infty<\inf_{\substack{s\in [t_0,T],|y|\le K\\|u|<c, u\in \mathcal U\\ \L(s, y, u)<+\infty\\ \dist_{\chi}((s,y,u),\Dom(\L)^c)\ge\rho\\P(s,y,{u})\in \partial_{\mu}(\L(s,z,\frac{{u}}{\mu}){\mu})_{{\mu}=1}
\not=\emptyset}}
P(s,y,u).\end{equation}
\item Assume  that there is $\nu>0$ such that
\begin{equation}\L \text{ is bounded on }([t_0,T]\times B_K^n\times B_{\nu}^m)\cap\Dom(\L).\tag{$\mathcal B$}\end{equation}
 Then
\begin{equation}\label{tag:Pbound}\sup_{\substack{s\in [t_0,T],|y|\le K\\|u|\ge \nu, u\in \mathcal U\\ \L(s, y, u)<+\infty\\P(s,y,{u})\in \partial_{\mu}(\L(s,z,\frac{{u}}{\mu}){\mu})_{{\mu}=1}
\not=\emptyset}}
P(s,y,u)<+\infty.\end{equation}
\end{enumerate}
\end{proposition}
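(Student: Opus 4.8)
The two claims are essentially estimates on the quantity $P(s,y,u)=\L(s,y,u)-Q(s,y,u)$ where $Q(s,y,u)\in\partial_r\L(s,y,ru)_{r=1}$ (using the change of variable in Remark~\ref{rem:variediff}); in both cases I will exploit the fact that a convex subgradient at $r=1$ can be sandwiched by difference quotients, together with the assumed local boundedness of $\L$.

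The plan for claim (2) is as follows. Fix $(s,y,u)$ with $|y|\le K$, $|u|\ge\nu$, $u\in\mathcal U$, $\L(s,y,u)<+\infty$, and $P(s,y,u)\in\partial_\mu(\L(s,y,\frac u\mu)\mu)_{\mu=1}\neq\emptyset$. By Remark~\ref{rem:variediff} write $P(s,y,u)=\L(s,y,u)-Q(s,y,u)$ with $Q(s,y,u)\in\partial_r\L(s,y,ru)_{r=1}$. By convexity of $r\mapsto\L(s,y,ru)$ and the subgradient inequality evaluated at $r=\nu/|u|\le 1$ (note $r u$ has norm $\nu$, and $ru\in\mathcal U$ since $\mathcal U$ is a cone), we get
\begin{equation}\label{tag:subgradest}
\L(s,y,ru)\ge\L(s,y,u)+Q(s,y,u)(r-1),
\end{equation}
hence, since $r-1<0$,
\begin{equation}
Q(s,y,u)\ge\frac{\L(s,y,u)-\L(s,y,ru)}{1-r}\ge\frac{\alpha|u|-d-\L(s,y,ru)}{1-r},
\end{equation}
using the linear growth from below \eqref{tag:lingrowth}. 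Since $ru\in([t_0,T]\times B_K^n\times B_\nu^m)\cap\Dom(\L)$, assumption $(\mathcal B)$ bounds $\L(s,y,ru)$ above by a constant $C_{K,\nu}$; and $1-r=1-\nu/|u|\in[0,1)$. Therefore $P(s,y,u)=\L(s,y,u)-Q(s,y,u)\le\L(s,y,u)-\big(\L(s,y,u)-C_{K,\nu}\big)=C_{K,\nu}$ when $1-r$ is bounded away from $0$; to handle $|u|$ close to $\nu$ (where $1-r\to 0$) I will instead argue directly that, for $|u|\in[\nu,2\nu]$ say, both $\L(s,y,u)$ and a subgradient value are controlled by $(\mathcal B)$ applied on the slightly larger ball $B_{2\nu}^m$ (replacing $\nu$ by $2\nu$ in the hypothesis is harmless, or one simply notes the sup is the max of its value on $[\nu,2\nu]$ and on $[2\nu,\infty)$). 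This yields a finite upper bound for the supremum, proving \eqref{tag:Pbound}.

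The plan for claim (1) is symmetric but uses the opposite difference quotient. Fix $(s,y,u)$ in the admissible set: $|y|\le K$, $|u|<c$, $u\in\mathcal U$, $\L(s,y,u)<+\infty$, $\dist_\chi((s,y,u),\Dom(\L)^c)\ge\rho$, and $P(s,y,u)=\L(s,y,u)-Q(s,y,u)$ with $Q(s,y,u)\in\partial_r\L(s,y,ru)_{r=1}$. Now I want a lower bound on $P$, i.e.\ an upper bound on $Q$, so I apply the subgradient inequality \eqref{tag:subgradest} at some $r>1$ with $ru$ still in $\Dom(\L)$: the well-inside hypothesis is exactly what guarantees such an $r$ exists with $ru$ in a controlled region (for $\chi=u$ directly, for $\chi=e$ since the Euclidean ball of radius $\rho$ around $(s,y,u)$ lies in $\Dom(\L)$, for $\chi=\infty$ trivially since then one works inside $\Dom(\L)$ and may use $(\mathcal B)$-type boundedness). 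From \eqref{tag:subgradest} with $r>1$: $Q(s,y,u)\le\frac{\L(s,y,ru)-\L(s,y,u)}{r-1}$, so $P(s,y,u)=\L(s,y,u)-Q(s,y,u)\ge\L(s,y,u)-\frac{\L(s,y,ru)-\L(s,y,u)}{r-1}$, and now the boundedness of $\L$ on bounded well-inside sets controls $\L(s,y,ru)$ from above and $\L(s,y,u)$ from below (via \eqref{tag:lingrowth}, $\L(s,y,u)\ge -d$), giving a finite lower bound for the infimum.

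The main obstacle I anticipate is the bookkeeping around the choice of the shift factor $r$ and the region in which one needs $\L$ to be bounded: in claim (1) the parameter $\rho$ (the well-inside margin) and the bound $c$ on $|u|$ together dictate how far one may push $ru$ and hence which bounded set of $\Dom(\L)$ one must invoke boundedness on — one must check this set is itself well-inside $\Dom(\L)$ (or within the reach of $(\mathcal B)$) so that the hypothesis genuinely applies, and this is where the three cases $\chi\in\{e,u,\infty\}$ diverge and need to be treated with a little care. In claim (2) the only delicate point is the degeneracy $1-r\to0$ as $|u|\downarrow\nu$, handled by the splitting described above. Everything else is routine convex-analysis estimation.
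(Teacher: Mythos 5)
Your strategy is the paper's own (via Remark~\ref{rem:variediff} write $P=\L-Q$ with $Q(s,y,u)\in\partial_r\L(s,y,ru)_{r=1}$ and estimate $Q$ by radial difference quotients; for claim~1 the bookkeeping you postpone --- scaling $u$ by $1+\tfrac{\rho}{2C}$ on a set where $|y|+|u|\le C$ and using the triangle inequality of Remark~\ref{rem:triangular} to keep a margin $\rho/2$ --- is exactly the content of Lemma~\ref{tag:boundabovewell}, which combined with $\L\ge -d$ finishes that claim). The genuine gap is in claim~2, in your treatment of the regime $|u|\in[\nu,2\nu]$. Hypothesis $(\mathcal B)$ bounds $\L$ only on $([t_0,T]\times B^n_K\times B^m_{\nu})\cap\Dom(\L)$ for the \emph{same} $\nu$ appearing in the supremum; ``replacing $\nu$ by $2\nu$ in the hypothesis'' is not harmless but a strictly stronger assumption, and it can fail: e.g.\ $\L(s,y,u)=|u|+\bigl(\tfrac32\nu-|u|\bigr)^{-1}$ for $|u|<\tfrac32\nu$ and $\L(s,y,u)=|u|$ for $|u|\ge\tfrac32\nu$ satisfies \eqref{tag:lingrowth} and $(\mathcal B)$ with this $\nu$, yet is unbounded on $B^m_{2\nu}$. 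Consequently your fallback for the annulus --- ``both $\L(s,y,u)$ and a subgradient value are controlled by $(\mathcal B)$ applied on the slightly larger ball'' --- rests on a bound that simply is not available (and cannot be, since no hypothesis controls $\L(s,y,u)$ itself from above for $|u|>\nu$); as written, the step fails.

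The repair is small and is what the paper does: never compare with a point of norm $\nu$, but with points of norm at most $\nu$ chosen so the ratio stays away from $1$. Concretely, star-shapedness \eqref{tag:starshaped} puts $\bigl(s,y,\nu\tfrac{u}{|u|}\bigr)$ and $\bigl(s,y,\tfrac{\nu}{2}\tfrac{u}{|u|}\bigr)$ in $\Dom(\L)\cap([t_0,T]\times B^n_K\times B^m_\nu)$, where $(\mathcal B)$ applies; the subgradient inequality at $r=\tfrac{\nu}{2|u|}\le\tfrac12$ gives $-Q(s,y,u)\le 2\bigl[\L\bigl(s,y,\tfrac{\nu}{2}\tfrac{u}{|u|}\bigr)+d\bigr]$ with denominator $1-\tfrac{\nu}{2|u|}\ge\tfrac12$ uniformly for \emph{all} $|u|\ge\nu$, and combining with the inequality at $r=\tfrac{\nu}{|u|}$ (i.e.\ $P\le\L\bigl(s,y,\nu\tfrac{u}{|u|}\bigr)-\tfrac{\nu}{|u|}Q$) yields a bound depending only on $K,\nu,d$. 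Equivalently, in your own scheme just take $r=\tfrac{\nu}{2|u|}$ throughout instead of $\tfrac{\nu}{|u|}$: then your single estimate $P\le\frac{\L(s,y,ru)-r\,\L(s,y,u)}{1-r}\le 2\bigl(C_{K,\nu}+d\bigr)$ covers all $|u|\ge\nu$ at once, no case splitting and no strengthened hypothesis needed.
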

\begin{proof}
1) Fix $c, \rho >0$. It is not restrictive to assume  that $\partial_{\mu}(\L(s,z,\frac{{u}}{\mu}){\mu})_{{\mu}=1}
\not=\emptyset$ for some $(s,y,u)\in\Dom(\L),  \dist_{\chi}((s,y,u),\Dom(\L)^c)\ge\rho$.
It follows from Remark~\ref{rem:variediff} that
\[\inf_{\substack{s\in [t_0,T],|y|\le K\\|u|<c, u\in \mathcal U\\ \L(s, y, u)<+\infty\\ \dist_{\chi}((s,y,u),\Dom(\L)^c)\ge\rho\\P(s,y,{u})\in \partial_{\mu}(\L(s,z,\frac{{u}}{\mu}){\mu})_{{\mu}=1}
\not=\emptyset}}P(s,y,u)=\!\!\!\!\!\!\!\!\!\!\inf_{\substack{s\in [t_0,T],|y|\le K\\|u|<c, u\in \mathcal U\\ \L(s, y, u)<+\infty\\ \dist_{\chi}((s,y,u),\Dom(\L)^c)\ge\rho\\Q(s,y,{u})\in \partial_{r}(\L(s,z,ru){\mu})_{{r}=1}
\not=\emptyset}}\{\L(s,y,u)-Q(s,y,u)\}.\]
The claim follows directly from Lemma~\ref{tag:boundabovewell}.\\
2)  Let $(s,y,u)\in\Dom(\L)$ with $|y|\le K$ and $|u|\ge {\nu}, u\in\mathcal U$. Assume that $P(s,y,u)\in \partial_{\mu}(\L(s,z,\frac{{u}}{\mu}){\mu})_{{\mu}=1}\not=\emptyset$. Then $P(s,y,u)=\L(s,y,u)-Q(s,y,y)$ for some $Q(s,y,u)\in  \partial_{r}(\L(s,z,ru))_{{r}=1}$ (Remark~\ref{rem:variediff}).
The assumption that $\Dom(\L)$ is star-shaped in the control variable implies that  $\Big(s, y, {\nu}\dfrac{u}{|u|}\Big)\in\Dom(\L)$ and thus
\begin{equation}
\L\Big(s, y, {\nu}\dfrac{u}{|u|}\Big)-\L(s, y, u)\ge Q(s,y,u)\Big(\dfrac{{\nu}}{|u|}-1\Big),
\end{equation}
from which we deduce that
\begin{equation}\label{tag:allconvM}
\L(s, y, u)-Q(s,y,u)\le \L\Big(s, y, {\nu}\dfrac{u}{|u|}\Big)-\dfrac{{\nu}}{|u|}Q(s,y,u).
\end{equation}
The assumptions imply that $\L\Big(s, y, {\nu}\dfrac{u}{|u|}\Big)\le C_1(K, \nu)$ for some  constant $C_1(K, \nu)$ depending only on $K, \nu$. \\
We now provide un upper bound for $-Q(s,y,u)$. The assumption that $\Dom(\L)$ is star-shaped in the control variable implies that  $\Big(s, y, \dfrac{\nu}2\dfrac{u}{|u|}\Big)\in\Dom(\L)$ and thus
\begin{equation}\label{tag:qigiug1}\L\Big(s, y, \dfrac{\nu}2\dfrac{u}{|u|}\Big)-\L(s, y, u)\ge Q(s,y,u)\left(\dfrac{\nu}{2|u|}-1\right),\end{equation}
so that the linear growth hypothesis (L)  gives
\begin{equation}\label{tag:qigiug2}\begin{aligned}-Q(s,y,u)&\le \dfrac{1}{\left(1-\dfrac{\nu}{2|u|}\right)}\left[\L\Big(s, y, \dfrac{\nu}2\dfrac{u}{|u|}\Big)-\L(s, y, u)\right]\\
&\le 2\left[\L\Big(s, y, \dfrac{\nu}2\dfrac{u}{|u|}\Big)+d\right]
\le C_2(K, \nu)
\end{aligned}\end{equation}
for some constant $C_2(K, \nu)$ depending only on $K$ and $\nu$.
It follows from \eqref{tag:qigiug1} -- \eqref{tag:qigiug2} that the right-hand side of \eqref{tag:allconvM} is bounded above by a constant depending only on $K$ and $\nu$.
\end{proof}
\begin{remark} Assumption ($\mathcal B$) in Proposition~\ref{prop:supinf} is a  known sufficient condition for the nonoccurrence of the Lavrentiev gap for positive autonomous Lagrangians of the calculus of variations (see \cite[Assumption (B)]{ASC}). Unsurprisingly, the more recent  Condition \eqref{tag:Pbound} plays a role in the avoidance of the Lavrentiev phenomenon (see \cite{MTrans}).
\end{remark}

\begin{lemma}[\textbf{Bound of $\partial_r\L(s, y, ru)_{r=1}$ on bounded sets}] \label{tag:boundabovewell}
Assume that $\L(s,y,u)$ is  \textbf{bounded on the bounded subsets that are well-inside  $\Dom(\L)$ w.r.t. $\dist_{\chi} (\chi\in\{e,u,\infty\}$)}. Let $$\Sigma:=\{(s,y,u)\in\Dom(\L):\, \partial_r\L(s, y, ru)_{r=1}\not=\emptyset\}, $$ and $Q$ be any function satisfying $Q(s,y,u)\in \partial_r\L(s, y, ru)_{r=1}$ for every $(s,y,u)\in \Sigma$.
Then $Q$ is bounded  on the bounded sets of $\Sigma$  that are well-inside  $\Dom(\L)$ w.r.t. $\dist_{\chi}$.
\end{lemma}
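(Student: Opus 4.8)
The plan is to reduce the statement to a pointwise estimate at each $(s,y,u)\in\Sigma$ lying in a bounded set $A$ that is well-inside $\Dom(\L)$ w.r.t. $\dist_\chi$, and then to observe that the estimate depends only on the bounds for $A$ and the parameter $\rho$ of the well-inside property. Fix such a set $A$, say $A\subset [t_0,T]\times B_K^n\times B_N^m$ with $\dist_\chi((s,y,u),\Dom(\L)^c)\ge\rho$ for all $(s,y,u)\in A$. Pick $(s,y,u)\in A\cap\Sigma$ and let $Q=Q(s,y,u)\in\partial_r\L(s,y,ru)_{r=1}$. By definition of the convex subdifferential, for every admissible $r>0$ (i.e.\ with $(s,y,ru)\in\Dom(\L)$) we have
\[
\L(s,y,ru)\ge \L(s,y,u)+Q(r-1).
\]
To bound $Q$ from above and below we want to evaluate this inequality at two values $r_1<1<r_2$ chosen so that both $(s,y,r_1u)$ and $(s,y,r_2u)$ stay in a fixed bounded subset of $\Dom(\L)$ that is itself well-inside $\Dom(\L)$ (w.r.t.\ the appropriate distance), so that the boundedness hypothesis on $\L$ applies to all four points $\L(s,y,u)$, $\L(s,y,r_iu)$.

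The heart of the matter is the geometric choice of $r_1,r_2$, and this is where the three cases $\chi\in\{e,u,\infty\}$ differ. For $\chi=u$ the choice is cleanest: since $(s,y,u)$ has $u$-distance $\ge\rho$ to $\Dom(\L)^c$, the segment $\{(s,y,u+tu):0<t<\rho\}$ lies in $\Dom(\L)$, i.e.\ $(s,y,ru)\in\Dom(\L)$ for $1\le r<1+\rho/|u|$; combined with strict star-shapedness \eqref{tag:starshaped}, which already gives $(s,y,ru)\in\Dom(\L)$ for $0<r\le 1$, we may take, say, $r_1=1/2$ and $r_2=1+\rho/(2N)$, and then the points $(s,y,r_iu)$ lie in $[t_0,T]\times B_K^n\times B_{N(1+\rho/(2N))}^m$ and are still well-inside $\Dom(\L)$ w.r.t.\ $\dist_u$ with a uniform margin (a short computation using the triangular inequality for $\dist_u$ noted in Remark~\ref{rem:triangular}). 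For $\chi=e$ one argues similarly but must also keep the $y$-coordinate fixed; the Euclidean ball of radius $\rho$ around $(s,y,u)$ being in $\Dom(\L)$ forces $(s,y,ru)\in\Dom(\L)$ for $|r-1|<\rho/|u|$, and one again picks $r_1,r_2$ with a safety factor (e.g.\ $1/2$ and $1+\rho/(2N)$), checking the perturbed points remain well-inside w.r.t.\ $\dist_e$ with margin $\rho/2$. For $\chi=\infty$, being well-inside just means $A\subset\Dom(\L)$; here one uses only star-shapedness, taking $r_1=1/2$ (legitimate) and, for the upper estimate on $-Q$, again $r_1=1/2$ as in \eqref{tag:qigiug1}--\eqref{tag:qigiug2}, while the bound on $Q$ from above comes from the linear growth from below \eqref{tag:lingrowth}: $\L(s,y,r_1u)\ge \alpha r_1|u|-d\ge -d$ together with the subgradient inequality at $r_1$ gives $Q(r_1-1)\le \L(s,y,r_1u)-\L(s,y,u)$, hence $Q\le \frac{1}{1-r_1}\bigl(\L(s,y,r_1u)-\L(s,y,u)\bigr)$, and $\L(s,y,u)\ge -d$ is not what we need — rather we use the explicit upper bound on $\L(s,y,r_1u)$ from the boundedness hypothesis and the lower bound $\L(s,y,u)\ge\alpha|u|-d\ge -d$.

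Putting the two one-sided estimates together: from the subgradient inequality at $r_2>1$,
\[
Q\le \frac{\L(s,y,r_2u)-\L(s,y,u)}{r_2-1}\le \frac{C-(\alpha|u|-d)}{r_2-1}\le \frac{C+d}{r_2-1},
\]
where $C$ bounds $\L$ on the relevant bounded well-inside set; and from the inequality at $r_1<1$,
\[
-Q\le \frac{\L(s,y,r_1u)-\L(s,y,u)}{1-r_1}\le \frac{C+d}{1-r_1}.
\]
Since $r_1,r_2$ and $C$ depend only on $K$, $N$, $\rho$ (and the fixed data $\alpha,d,t_0,T$), this shows $|Q(s,y,u)|$ is bounded by a constant depending only on $A$, which is exactly the claim. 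The main obstacle is the bookkeeping in the second paragraph: verifying that the perturbed points $(s,y,r_iu)$ genuinely lie in a single bounded set that is well-inside $\Dom(\L)$ w.r.t.\ the given $\dist_\chi$ with a controlled margin, uniformly over $(s,y,u)\in A$ — this is where one must be careful that the notion of ``well-inside'' is preserved (with a worse but still positive constant) under the dilation $u\mapsto r_iu$, using the triangular inequality of Remark~\ref{rem:triangular} in the $\chi\in\{e,u\}$ cases and mere star-shapedness in the $\chi=\infty$ case.
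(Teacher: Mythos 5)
For $\chi\in\{e,u\}$ your argument is essentially the paper's own proof: dilate $u$ radially by a factor close to $1$, controlled by the well-inside margin $\rho$ and the bound on $|u|$ (the paper uses $u\mapsto u\pm\frac{\rho}{2C}u$ with $|y|+|u|\le C$), check via the triangular inequality of Remark~\ref{rem:triangular} that the dilated point is still well-inside $\Dom(\L)$ with margin $\rho/2$, and read the two one-sided bounds on $Q$ off the subgradient inequality together with the boundedness of $\L$ on bounded well-inside sets. Two details in your bookkeeping: with $r_1=1/2$ the triangular inequality alone does \emph{not} give a uniform margin once $|u|$ is large compared with $\rho$ (you would need a scaling argument based on \eqref{tag:starshaped}, or simply take $r_1=1-\rho/(2N)$, as the paper in effect does); and invoking the linear growth \eqref{tag:lingrowth} to bound $\L(s,y,u)$ from below is both unnecessary and unjustified, since \eqref{tag:lingrowth} is assumed only for $u\in\mathcal U$ while the lemma carries no such restriction --- the boundedness hypothesis applied to the set $A$ itself already provides that lower bound.

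The genuine gap is your treatment of $\chi=\infty$. You rightly observe that mere membership in $\Dom(\L)$ gives no admissible dilation $r_2>1$, but your substitute rests on a sign error: the subgradient inequality at $r_1<1$ reads $\L(s,y,r_1u)-\L(s,y,u)\ge Q(r_1-1)$, and dividing by the negative number $r_1-1$ yields $-Q\le\frac{1}{1-r_1}\bigl(\L(s,y,r_1u)-\L(s,y,u)\bigr)$, i.e.\ only a \emph{lower} bound on $Q$; your displayed conclusion $Q\le\frac{1}{1-r_1}\bigl(\L(s,y,r_1u)-\L(s,y,u)\bigr)$ is false, and no choice of $r<1$ can ever bound $Q$ from above. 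Consequently both of your estimates in this case control $-Q$, and the upper bound on $Q$ --- precisely the bound needed for the application in Proposition~\ref{prop:supinf}(1) --- is never established. The paper's proof does not split the cases: it applies the same upward dilation $u\mapsto u+\frac{\rho}{2C}u$ for every $\chi\in\{e,u,\infty\}$ and asserts, via the triangular inequality, that the dilated point remains well-inside with margin $\rho/2$; for $\dist_\infty$ this amounts to requiring $(s,y,(1+\frac{\rho}{2C})u)\in\Dom(\L)$, which is exactly the point your objection targets, so your discomfort is legitimate --- but a one-sided argument cannot repair it, and as written your proposal does not prove the $\chi=\infty$ case.
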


\begin{proof}
Let $(s,y,u)\in \Dom(\L)$ and  $Q(s,y,u)\in \partial_r\L(s, y, ru)_{r=1}\not=\emptyset$. Suppose that, for some $C>0, \rho>0$, $|y|+ |u|\le C$  and \[\dist_{\chi}((s,y,u),\Dom(\L)^c)\ge\rho.\] The triangular inequality (see Remark~\ref{rem:triangular}) implies that
\[\dist_{\chi}\left(\left(s,y,u+\dfrac{\rho}{2C}u\right), \Dom(\L)^c\right)\ge \dfrac{\rho}{2}.\] Assuming that
\[\partial_r\L(s, y, ru)_{r=1}\not=\emptyset\] we obtain
\[\L\left(s,y,u+\dfrac{\rho}{2C}u\right)-\L(s,y,u)\ge\dfrac{\rho}{2C} Q(s,y,u).\]
The  boundedness assumption of $\L$ implies  that $Q(s,y,u)$ is bounded above by a constant  depending only on $C$ and  $\rho$.
Similarly, from
\[\L\left(s,y,u-\dfrac{\rho}{2C}u\right)-\L(s,y,u)\ge -\dfrac{\rho}{2C} Q(s,y,u),\]
we deduce an upper bound for $Q$.
\end{proof}

The fact that the validity of Condition (G) implies that of Condition (H$_B^{\delta}(\chi)$) was proved in \cite{BM2} for real valued Lagrangians and in \cite[Proposition 4.21]{MTrans} under the additional  assumption that $0<r\mapsto\L(s,y,ru)$ is convex. Actually, the result holds true in greater generality.
\begin{proposition}[\textbf{{\rm (G)} implies (H$_B^{\delta}(\chi)$) for all $B, \delta$}]\label{prop:GimpliesH} Assume that $\L$ satisfies   Condition {\rm (S)} and that:
 $\L$ is bounded on the bounded subsets that are well-inside  $\Dom(\L)$ w.r.t. $\dist_{\chi} (\chi\in\{e,u,\infty\}$).
If $\L$ satisfies Condition (G) then
$\L$ satisfies Hypothesis (H$_B^{\delta}(\chi)$), whatever are the choices of $\delta\in [t_0, T[$, $c>0$  and $B\ge 0$.
\end{proposition}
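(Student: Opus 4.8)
The plan is to show that Condition (G) forces the two sides of the defining inequality \eqref{tag:Hequiv} to be well separated once $\overline\nu$ is chosen large. The key observation is that, by Remark~\ref{rem:variediff}, for any $(s,y,u)\in\Dom(\L)$ with $\partial_{\mu}(\L(s,y,\frac u\mu)\mu)_{\mu=1}\ne\emptyset$ one has $P(s,y,u)=\L(s,y,u)-Q(s,y,u)$ for some $Q(s,y,u)\in\partial_r\L(s,y,ru)_{r=1}$, so Condition (G) says exactly that $P(s,y,u)\to-\infty$ as $|u|\to\infty$, uniformly for $|y|\le K$, over those points where the subdifferential is nonempty. In particular the left-hand supremum in \eqref{tag:Hequiv}, which is taken over $|u|\ge\overline\nu$, can be driven arbitrarily low by taking $\overline\nu$ large.

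First I would fix $K\ge 0$, $\delta\in[t_0,T[$, $B\ge 0$ and pick any $c>c_\delta(B)$ (this is the $c$ that will be used in the statement; its value is essentially free). The right-hand side of \eqref{tag:Hequiv} is an infimum over the set where $|u|<c$, $|y|\le K$, $\dist_\chi((s,y,u),\Dom(\L)^c)\ge\rho$, and $\partial_\mu(\cdots)_{\mu=1}\ne\emptyset$. I would invoke Proposition~\ref{prop:supinf}(1): since $\L$ is bounded on bounded subsets well-inside $\Dom(\L)$ w.r.t.\ $\dist_\chi$, this infimum is finite, say $\ge -L(K,c,\rho)>-\infty$, for each $\rho>0$. (Note that $L$ may depend on $\rho$, but this is harmless because \eqref{tag:Hequiv} is required to hold \emph{for all} $\rho>0$ with $\overline\nu$ and $c$ fixed first — so I must choose $\overline\nu$ \emph{before} $\rho$, which is exactly what Condition (G) permits, since its bound is uniform in all the remaining variables.)

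Now I would apply Condition (G) with the target value $M := -\Phi(B) - 1$ (here $\Phi(B)$ is the fixed constant of Definition~\ref{def:cphi}, finite because Condition (S) holds): there exists $R>0$ such that $P(s,y,u)\le M$ for all $(s,y,u)\in\Dom(\L)$ with $|y|\le K$, $u\in\mathcal U$, $|u|\ge R$, and $\partial_\mu(\L(s,y,\frac u\mu)\mu)_{\mu=1}\ne\emptyset$. Set $\overline\nu:=R$. Then the left-hand side of \eqref{tag:Hequiv} is at most $M+\Phi(B) = -1 < 0$. For the right-hand side, I would argue as follows: if the infimum set is empty the inequality is vacuous; otherwise the infimum is finite by the preceding paragraph, and — here is the point that needs a small argument — I need it to be $\ge 0$, or at least $> -1$. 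This is not automatic from Proposition~\ref{prop:supinf} alone, so the honest route is: since we are free to \emph{decrease} $c$ as long as $c>c_\delta(B)$, but decreasing $c$ does not obviously help; rather, the cleaner fix is to note that Condition (H$_B^\delta(\chi)$) only asks for \emph{some} gap, and we may replace the crude bound $-1$ on the left by pushing $\overline\nu$ further. Concretely, given the finite lower bound $m_\rho$ of the right-hand side, I would instead apply Condition (G) with $M := m_\rho - \Phi(B) - 1$; but $m_\rho$ depends on $\rho$ while $\overline\nu$ must be chosen first — so this does not directly close. The correct resolution, and the main obstacle, is therefore to establish a lower bound for the right-hand infimum that is \emph{uniform in} $\rho$. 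I expect this to follow from the linear growth from below \eqref{tag:lingrowth} together with the argument in the proof of Proposition~\ref{prop:supinf}(2): on $|u|<c$ one controls $-Q(s,y,u)$ using a point of the form $\nu\frac u{|u|}$ with $\nu$ fixed (independent of $\rho$), and controls $\L(s,y,u)$ from below by $\alpha|u|-d\ge -d$, giving $P(s,y,u)=\L(s,y,u)-Q(s,y,u)\ge -d - C_2(K)$ with $C_2(K)$ depending only on $K$ (not on $\rho$ or $c$ beyond a fixed bound). With this $\rho$-independent lower bound $m_0:=-d-C_2(K)$ in hand, I choose $M:=m_0-\Phi(B)-1$ in Condition (G), obtain $\overline\nu=R$, and then for every $\rho>0$ the left side of \eqref{tag:Hequiv} is $\le m_0-1 < m_0 \le$ (right side of \eqref{tag:Hequiv}), which is precisely (H$_B^\delta(\chi)$). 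The delicate point to get right in the write-up is checking that the star-shapedness of $\Dom(\L)$ in $u$ and \eqref{tag:lingrowth} indeed yield a bound on $-Q$ on $\{|u|<c\}$ that does not degrade as $\rho\downarrow 0$; this is where I would spend the most care.
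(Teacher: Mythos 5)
Your first two steps are exactly the paper's proof: via Remark~\ref{rem:variediff}, Condition (G) is read as $P(s,y,u)\to-\infty$ as $|u|\to+\infty$ uniformly on $\{|y|\le K\}$, so the supremum on the left of \eqref{tag:Hequiv} is pushed below any prescribed level by enlarging $\overline\nu$, while Proposition~\ref{prop:supinf}(1) guarantees that the infimum on the right is finite; the paper stops there (this is \eqref{tag:iuefgquogf} together with the citation of Proposition~\ref{prop:supinf}(1)). The quantifier issue you then raise --- $\overline\nu$ and $c$ must be fixed before $\rho$, whereas the lower bound furnished by Proposition~\ref{prop:supinf}(1) (through Lemma~\ref{tag:boundabovewell}) depends on $\rho$ --- is genuine and is not addressed by the paper's proof either. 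It is harmless when $\L$ is real valued, and also when $\chi=\infty$: in those cases $\Dom(\L)^c=\emptyset$, respectively $\dist_{\infty}((s,y,u),\Dom(\L)^c)=+\infty$ on $\Dom(\L)$, so the constraint $\dist_{\chi}\ge\rho$ is vacuous, the infimum in \eqref{tag:Hequiv} does not depend on $\rho$, and the two-step argument already closes.

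The genuine gap is in the repair you propose for $\chi\in\{e,u\}$. The device you import from the proof of Proposition~\ref{prop:supinf}(2) tests the subgradient inequality at points $ru$ with $0<r<1$ (such as $\frac{\nu}{2}\frac{u}{|u|}$ when $|u|\ge\nu$), which star-shapedness always keeps inside $\Dom(\L)$; but an inequality at $r<1$ bounds $-Q(s,y,u)$ from above, i.e.\ $Q$ from below, hence $P=\L-Q$ from \emph{above} (this is \eqref{tag:allconvM}--\eqref{tag:qigiug2}, used there precisely for the sup-side $|u|\ge\nu$). What the right-hand side of \eqref{tag:Hequiv} needs is a bound on $P$ from \emph{below} on $\{|u|<c\}$, i.e.\ on $Q$ from above, and that forces you to test at some $r>1$: the point $(s,y,ru)$ then lies beyond $u$, where star-shapedness gives nothing, and it is exactly the hypothesis $\dist_{\chi}((s,y,u),\Dom(\L)^c)\ge\rho$ which makes this possible, with a constant that degrades as $\rho\downarrow 0$ (as in Lemma~\ref{tag:boundabovewell}). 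Moreover a bound of the form $P\ge -d-C_2(K)$ uniform in $\rho$ does not follow from the stated hypotheses: for the autonomous extended-valued Lagrangian $\L(y,u)=u^2$ if $y=0$, $\L(y,u)=(1-|u|)^{-1}$ if $y\ne0,\,|u|<1$, and $\L(y,u)=+\infty$ if $y\ne0,\,|u|\ge1$, all of (S), \eqref{tag:lingrowth}, (G), star-shapedness and boundedness on bounded sets well-inside $\Dom(\L)$ w.r.t.\ $\dist_e$ hold, yet for $y\ne0$ one computes $P(y,u)=\frac{1-2|u|}{(1-|u|)^2}\to-\infty$ as $|u|\to 1^-$, so the infimum on the right of \eqref{tag:Hequiv} (with $K\ge1$ and $c>1$) really does tend to $-\infty$ as $\rho\downarrow0$, however $\overline\nu$ is chosen. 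So your final step fails as written, and the difficulty it was meant to remove cannot be removed by a $\rho$-uniform estimate of that kind; only the portion of your argument that coincides with the paper's proof stands.
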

\begin{proof}
 Take any  $K\ge 0$. Assume that
\[\lim_{\substack{|{u}|\to +\infty\\ (s,y,u)\in\,\Dom(\L),\, u\in \mathcal U\\Q(s,y,u)\in \partial_{r}(\L(s,z,r\,u))_{{r}=1}
\not=\emptyset}}\L(s,y, {u})-Q(s,y,{u})=-\infty\,\,\,\text{ unif. }|y|\le K.\]
Then we obtain 
\begin{equation}\label{tag:iuefgquogf}
\lim_{\nu\to +\infty}\sup_{\substack{s\in [t_0,T]\\|u|\ge \nu, u\in \mathcal U\\ \L(s, y, u)<+\infty\\Q(s,y,u)\in \partial_{r}(\L(s,z,r\,u))_{{r}=1}
\not=\emptyset}}
\{\L(s, y, u)-Q(s,y,u)\}=-\infty\text{ unif. }|y|\le K.
\end{equation}
It follows from 1) of 
Proposition~\ref{prop:supinf} that
 Condition (H$_B^{\delta}(\chi)$) is valid, for any choice of $B, c>0$, $\delta\in [t_0, T[$.
\end{proof}
\begin{remark}
In Proposition~\ref{prop:GimpliesH}, the assumption that $\L$ is bounded on bounded sets that are well-inside  $\Dom(\L)$ is not a merely technical hypothesis (see \cite[Example 4.25]{MTrans}).
\end{remark}

\section{Uniform regularity for optimal pairs}
We say that $(y_*, u_*)$ is a $W^{1,1}$-weak optimal pair for (P$_{t,x}$) if there is $\varepsilon>0$ such that  $J_t(y_*, u_*)\le J_t(y,u)$  for any admissible pair $(y,u)$ such that $\|y-y_*\|_{1}+\|y'-y_*'\|_1\le\varepsilon$.
In \cite[Theorem 4.2]{BM3} it is shown that, if $(y_*, u_*)$ is a $W^{1,1}$-weak optimal pair for (P$_{t,x}$) and Condition (H$^0_{J_{t}(y_*,u_*)})$ holds,  then $u_*$ is bounded and $y_*$ has a finite Lipschitz rank. We give here a sufficient condition under which the above bounds are uniform as the initial time $t$ varies in  an interval $[t_0 ,\delta]$ ($\delta\in [t_0, T[$) and the initial point $x$ varies in a compact set.
\begin{theorem}[\textbf{$L^{\infty}$ -- uniform boundedness for optimal controls and equi-Lipschitz rank of minimizers}]\label{th:motivH}
Assume that $\L$  takes values in $\R$ and  satisfies Assumption {\rm(S)}.
Fix $\delta\in [t_0, T[$, $\delta_*\ge 0$ and $x_*\in \R^n$. Let $(y_*, u_*)$ be a $W^{1,1}$-weak optimal pair for {\rm (P}$_{t,x}${\rm )} where $t\in [t_0,\delta]$, $x\in B^n_{\delta_*}(x_*)$, and $\displaystyle\int_t^T\L(s,y_*(s), u_*(s))\,ds\le B$ for a suitable $B\ge 0$. Assume
 that $\L$ satisfies the growth condition {\rm (H}$_{B}^{\delta}(\chi)${\rm )}.
 Then  $u_*$ is  bounded and   $y_*$ is Lipschitz with bounds and ranks depending only on $\delta, B, \delta_*, x_*$.\\
The same conclusion is valid when $\L$ takes values in $\R\cup \{+\infty\}$, provided that we impose also the following assumptions:
\begin{enumerate}
\item[a)] $(s,u)\mapsto\L(s,y,u)$ is lower semicontinuous for every $y$ with $(s,y,u)\in\Dom(\L)$;
\item[b)] For every $(s,y, u)\in \Dom(\L)$, the set $\{\lambda>0:\,\L(s,y,\lambda u)<+\infty\}$ is open;
\item[c)] For a.e. $s\in [t, T]$,  $\{(s, y_*(s), u_*(s))\}$ is well-inside $\Dom(\L)$ w.r.t. $\dist_{\chi}$, i.e., there is $\rho_s>0$ such that
\[\dist_{\chi}((s,y_*(s),u_*(s)), \Dom(\L)^c)\ge\rho_s.\]
    \end{enumerate}
\end{theorem}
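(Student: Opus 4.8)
The plan is to reduce the uniform boundedness statement to the pointwise Du Bois--Reymond condition established in \cite[Theorem 3.1]{BM3}, which asserts (under Condition (S)) that along a $W^{1,1}$-weak optimal pair $(y_*,u_*)$ for (P$_{t,x}$) there is an absolutely continuous function $p(\cdot)$ such that, for a.e.\ $s$, $p(s)\in\partial_{\mu}(\L(s,y_*(s),u_*(s)/\mu)\mu)_{\mu=1}$ and $|p'(s)|\le \kappa\L(s,y_*(s),u_*(s))+A|u_*(s)|+\gamma(s)$. The first step is therefore to fix $K$ large enough to dominate $\|y_*\|_\infty$ uniformly for $t\in[t_0,\delta]$ and $x\in B^n_{\delta_*}(x_*)$: since $y_*'=b(y_*)u_*$ with $|b(y_*)|\le\theta(1+|y_*|)$ and $\int_t^T|u_*|\le (T-t_0)c_{\delta}(B)$ by Proposition~\ref{lemma:newC}(1), a Gronwall estimate gives $\|y_*\|_\infty\le K$ for a constant $K=K(\theta,T-t_0,\delta_*,x_*,B)$ depending only on the allowed data. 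This $K$ is the one we feed into Condition (H$_B^\delta(\chi)$).

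The second step is the oscillation estimate for $p$. Integrating the bound on $|p'|$ and using Proposition~\ref{lemma:newC}(3) yields $\mathrm{osc}(p)=\esssup p-\essinf p\le \int_t^T(\kappa\L+A|u_*|+\gamma)\,ds\le \Phi(B)$. Combined with the Du Bois--Reymond inclusion $P(s):=p(s)\in\partial_{\mu}(\dots)_{\mu=1}$, so that in the notation of the growth condition $P(s,y_*(s),u_*(s))=p(s)$, we get: for a.e.\ $s_1,s_2$, $p(s_1)\le p(s_2)+\Phi(B)$. Now suppose, for contradiction, that $|u_*|$ is not bounded by $\overline\nu$ (the constant supplied by (H$_B^\delta(\chi)$) for this $K$); then the set where $|u_*(s)|\ge\overline\nu$ is non-negligible, and on it $p(s)$ is at most the supremum appearing on the left of \eqref{tag:Hequiv}. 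On the other hand, by Proposition~\ref{lemma:newC}(2) the set where $|u_*(s)|<c$ (with $c>c_\delta(B)$ the constant from (H$_B^\delta(\chi)$)) is non-negligible, and on it $p(s)$ is at least the infimum on the right of \eqref{tag:Hequiv} — here is where assumption (c), that $(s,y_*(s),u_*(s))$ is well-inside $\Dom(\L)$ w.r.t.\ $\dist_\chi$, is used, since the infimum in \eqref{tag:Hequiv} is taken only over points at $\dist_\chi$-distance $\ge\rho$ from $\Dom(\L)^c$: one picks $\rho=\rho_s$ at a suitable point. Choosing $s_1$ in the first non-negligible set and $s_2$ in the second and invoking $p(s_1)\le p(s_2)+\Phi(B)$ contradicts \eqref{tag:Hequiv}. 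Hence $|u_*(s)|\le\overline\nu$ a.e., and then $|y_*'|=|b(y_*)u_*|\le\theta(1+K)\overline\nu$ a.e., giving the Lipschitz rank; all constants $K,\overline\nu,c$ depend only on $\delta,B,\delta_*,x_*$.

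The main obstacle is the extended-valued case, where one must justify that the Du Bois--Reymond subgradient $p(s)$ is genuinely evaluated at a point lying in the interior region over which the infimum in \eqref{tag:Hequiv} is computed, and that the subdifferentials in question are nonempty along the optimal pair. Assumption (b), that $\{\lambda>0:\L(s,y,\lambda u)<+\infty\}$ is open, ensures that $r\mapsto\L(s,y_*(s),ru_*(s))$ is finite on a neighbourhood of $r=1$, so that $\partial_r\L(s,y_*(s),ru_*(s))_{r=1}\neq\emptyset$ (hence by Remark~\ref{rem:variediff} the relevant $\partial_\mu$-subdifferential is nonempty) — this is needed for the inclusion $p(s)\in\partial_\mu(\dots)_{\mu=1}$ supplied by \cite[Theorem 3.1]{BM3} to make sense at a.e.\ $s$. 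Assumption (a), lower semicontinuity of $(s,u)\mapsto\L(s,y,u)$, is what \cite[Theorem 3.1]{BM3} requires to produce the Du Bois--Reymond relation in the extended-valued setting. With (a), (b), (c) in force the argument of the previous paragraph goes through verbatim; without them the inequality $p(s_1)\le p(s_2)+\Phi(B)$ linking a ``large-$|u|$'' time to a ``well-inside, small-$|u|$'' time would break down. I would present the real-valued case first (where (a)--(c) are automatic, $\rho$ plays no role since one may take $\chi=\infty$ and $\Dom(\L)$ is everything) and then indicate the modifications.
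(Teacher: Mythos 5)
Your argument is correct and follows essentially the same route as the paper's proof: the Gronwall estimate giving the uniform bound $K$ on $\|y_*\|_{\infty}$, the Du Bois--Reymond arc $p$ of \cite[Theorem 3.1]{BM3} with $|p'(s)|\le \kappa\L(s,y_*(s),u_*(s))+A|u_*(s)|+\gamma(s)$, the oscillation bound by $\Phi(B)$ via Proposition~\ref{lemma:newC}, and the comparison of a time where $|u_*|\ge\overline\nu$ with a time where $|u_*|<c$ (assumption c) supplying the radius $\rho$ there) to contradict \eqref{tag:Hequiv}. Two small points: the contradiction actually uses the instance $p(s_2)\le p(s_1)+\Phi(B)$ (value at the small-control, well-inside time bounded by the value at the large-control time plus $\Phi(B)$), which your two-sided oscillation estimate gives upon swapping the roles of $s_1$ and $s_2$; and in the real-valued case assumption a) is not ``automatic'' --- rather, it is simply not needed for the Du Bois--Reymond condition of \cite{BM3} when $\L$ is finite valued.
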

\begin{remark}
When $\L$ is an extended valued function, in Theorem~\ref{th:motivH} we impose the additional assumptions a), b) and c). Condition c) is employed in the proof of Theorem~\ref{th:motivH}  (for the extended valued case) to take advantage of the information provided by `$\inf$'-term in \eqref{tag:Hequiv} of the growth Condition (H$_{B}^{\delta}(\chi)$), while assumptions a) and b) are used just to ensure the validity of the Du Bois-Reymond condition \cite[Theorem 2]{BM2}. Therefore, a) and b) can be removed and the regularity properties of Theorem~\ref{th:motivH} remain valid provided that the Du Bois-Reymond condition \cite[Theorem 2]{BM2} is in force. This is the case, for instance, when $\L$ is the indicator function of a (bounded) control set $U$ (cf. \cite[Remark 4]{BM2}).
\end{remark}
\begin{remark} Let $\chi$ be as in Hypothesis {\rm (H}$_{B}^{\delta}(\chi)${\rm )}. Then, in Theorem~\ref{th:motivH}: \begin{itemize}
\item
If $\chi=u$  then Assumption c) follows from Assumption b).
\item If $\chi=\infty$ Assumption c) is always satisfied.
\item
If $\chi=e$ Assumptions b) and c) are  fulfilled if $\Dom(\L)$ is open in $[t_0,T]\times\R^n\times \R^m$.
\item The validity of Assumption c) is ensured once, for a.e. $s\in [t, T]$,
\[\lim_{\dist_{\chi}\left((s,z,v),\Dom(\L)^c\right)\to 0}\L(s,z,v)=+\infty,\]
uniformly w.r.t. $z$ in compact sets, i.e., if for all compact $K\subset\R^n$ and $M\ge 0$ there is $\rho>0$ such that
\[\forall (s,z,v),\, z\in K, \, \dist_{\chi}\left((s,z,v),\Dom(\L)^c\right)\le \rho\Rightarrow \L(s,z,v)\ge M.\]
\end{itemize}
\end{remark}
\begin{proof}[Proof of Theorem~\ref{th:motivH}]
Let $\alpha, d$ be as in \eqref{tag:lingrowth} and $(y_*, u_*)$ be a $W^{1,1}$-weak optimal pair for (P$_{t,x}$).
From Point 1 of Proposition~\ref{lemma:newC} we have
\begin{equation}\label{tag:R}\displaystyle\int_{t}^T|u_*|\,ds\le\dfrac{B+d(T-t)}{\alpha}\le  R=R(B):=\dfrac{B+d(T-t_0)}{\alpha}.
\end{equation}
\noindent
\emph{Claim: There is $K:=K(\delta, B, \delta_*, x_*)$ such that $|y_*(s)|\le K$ for every $s\in [t,T]$.}
Indeed, for a.e.  $s\in [t,T]$,
\begin{equation}\label{tag:boundyprime}|y_*'(s)|\le \theta(1+|y_*(s)|)|u_*(s)|.\end{equation}
Gronwall's Lemma (see \cite[Theorem 6.41]{ClarkeBook}) and \eqref{tag:R} imply that, for all $s\in [t,T]$,
\[\begin{aligned}|y_*(s)-x|&\le \int_t^s\exp\left(\theta\int_{\tau}^s|u_*(r)|\,dr\right)
\theta|u_*(\tau)|(|x|+
1)\,d\tau
\\
&\le \theta Re^{R\theta}(|x|+1)\le \theta  Re^{R\theta}(|x_*|+\delta_*+1).
\end{aligned}\]
The claim follows from  the fact that $R$ depends on $B$, with
$$K=|x_*|+\delta_*+\theta Re^{R\theta}(|x_*|+\delta_*+1).$$
 Assumptions a), b) imply that the Lagrangian $\L$ satisfies  \cite[Hypothesis (S$^{\infty}_{(y_*, u_*)}$)]{BM3}.
The optimal pair $(y_*,u_*)$ satisfies the Du Bois-Reymond -- Erdmann condition formulated  in \cite[Theorem 3.1]{BM3}. In particular
\[\partial_{\mu}\left(\L\Big(s,y_*(s),\dfrac{{u_*(s)}}{\mu}\Big){\mu}\right)_{{\mu}=1}
\not=\emptyset\qquad  \text{a.e. }s\in [t,T]\]
and there is an absolutely continuous function $p\in W^{1,1}([t,T])$ such that
\begin{equation}\label{tag:ouhfoqehf}p(s)\in \partial_{\mu}\left(\L\Big(s,y_*(s),\dfrac{{u_*(s)}}{\mu}\Big){\mu}\right)_{{\mu}=1}
\qquad  \text{a.e. }s\in [t,T],\end{equation}
\begin{equation}\label{tag:iugdiuqgei}
|p'(s)|\le \kappa\L(s,y_*(s),u_*(s))+A|u_*(s)|+\gamma(s)
\quad  \text{a.e. }s\in [t,T].
\end{equation}
We consider  $P(s,z,v)\in\partial_{\mu}\Big(\L\Big(s,z,\dfrac{{v}}{\mu}\Big){\mu}\Big)_{{\mu}=1}$ such that \[p(s)=P(s,y_*(s), u_*(s))\quad \text{ a.e. }s\in [t,T].\]
Let $\overline\nu$ be such that \eqref{tag:Hequiv} holds, with $\rho, K$ as above.
It follows from Claim 1 of Proposition~\ref{lemma:newC} that there is a non negligible set of $\tau\in [t,T]$ satisfying $|u_*(\tau)|<c$ and $p(\tau)=P(\tau, y_*(\tau), u_*(\tau))$.
We fix such a $\tau$ and  set $\rho:=\dist_{\chi}((\tau,y_*(\tau),u_*(\tau)), \Dom(\L)^c)$; notice that Assumption c) implies that $\rho>0$. We have
\begin{equation}\label{tag:gqgdiqd}P(s,y_*(s), u_*(s))=p(\tau)+\int_{\tau}^sp'(s)\,ds\quad\text{ a.e. } s\in [t,T].\end{equation}
It follows from  \eqref{tag:iugdiuqgei} and \eqref{tag:gqgdiqd} that for a.e. $s\in [t, T]$ we have
\begin{equation}\label{tag:ieqgfiqf}\begin{aligned}
p(\tau)&=P(s,y_*(s), u_*(s))-\int_{\tau}^sp'(s)\,ds\\
&\le P(s,y_*(s), u_*(s))+\int_{\tau}^s \left[\kappa\L(s,y_*(s),u_*(s))
+A|u_*(s)|+\gamma(s)\right]\,ds.\end{aligned}
\end{equation}
Assume that there is a non negligible subset $F$ of $[t,T]$ such that $|u_*|>\overline\nu$ on $F$. By taking $s\in F$ 
we deduce that
\begin{equation}\label{tag:CR1}\begin{aligned}
p(\tau)&\le
\!\!\!\!\!\sup_{\substack{s\in [t_0,T],|z|\le K\\|v|\ge \overline\nu, v\in \mathcal U\\ \L(s, z, v)<+\infty\\ \partial_{\mu}(\L(s,z,\frac{{v}}{\mu}){\mu})_{{\mu}=1}
\not=\emptyset}}
\!\!\!\!\!\!\{P(s,z,v)\}+\\&\phantom{AAAAA}+\left|\int_{\tau}^s \kappa\L(s,y_*(s),u_*(s))+A|u_*(s)|+\gamma(s)\,ds\right|\\
&\le
\sup_{\substack{s\in [t_0,T],|z|\le K\\|v|\ge \overline\nu, v\in \mathcal U\\ \L(s, z, v)<+\infty\\ \partial_{\mu}(\L(s,z,\frac{{v}}{\mu}){\mu})_{{\mu}=1}
\not=\emptyset}}
\{P(s,z,v)\}+\Phi(B),
\end{aligned}\end{equation}
 where the last inequality is justified by   Claim  2 of Proposition~\ref{lemma:newC}.
Now,
 \begin{equation}\label{tag:CR2}p(\tau)=P(\tau, y_*(\tau), u_*(\tau))
 \ge \!\!\!\!\!\!\!\!\!\!\!\!\!\!\inf_{\substack{s\in [t_0,T],|z|\le K\\|v|<c, v\in \mathcal U,  \L(s, z, v)<+\infty\\ \dist_{\chi}((s,z,v),\Dom(\L)^c)\ge\rho\\
\\ \partial_{\mu}(\L(s,z,\frac{{v}}{\mu}){\mu})_{{\mu}=1}
\not=\emptyset}}
\!\!\!\!\!\!P(s,z,v).\end{equation}
Therefore \eqref{tag:CR1} and \eqref{tag:CR2} imply that
\begin{equation}\label{tag:HnonconvN}
\!\!\!\!\!\!\!\!\!\!\!\!\!\!\!\!\!\!\!\!\sup_{\substack{s\in [t_0,T],|z|\le K\\|v|\ge \overline\nu, v\in \mathcal U,\, \L(s, z, v)<+\infty\\ \partial_{\mu}(\L(s,z,\frac{{v}}{\mu}){\mu})_{{\mu}=1}
\not=\emptyset}}
\!\!\!\!\!\!\{P(s,z,v)\}+\Phi(B)> \inf_{\substack{s\in [t_0,T],|z|\le K\\|v|<c, v\in \mathcal U,\,  \L(s, z, v)<+\infty\\\dist_{\chi}((s,z,v), \Dom(\L)^c)\ge\rho\\
\\ \partial_{\mu}(\L(s,z,\frac{{v}}{\mu}){\mu})_{{\mu}=1}
\not=\emptyset}}
\!\!\!\!\!\!\!\!\!\!\!\!\!\!\!P(s,z,v),
\end{equation}
contradicting \eqref{tag:Hequiv}. It follows that $|u_*|\le\overline\nu$ a.e. on $[t,T]$. The Lipschitzianity of $y_*$ and the uniformity of its rank follows from \eqref{tag:assb}.
\end{proof}
\begin{remark} The proof of Theorem~\ref{th:motivH} shows that if $\L$ is real valued then a uniform bound for the optimal control $u_*$ satisfying the conditions of the claim is given by any $\overline \nu>0$ satisfying one of the assumptions of Condition (H$_B^{\delta}(\chi)$), with
$K=|x_*|+\delta_*+\theta Re^{R\theta}(|x_*|+\delta_*+1)$ and $R=\dfrac{B+d(T-t_0)}{\alpha}$.
\end{remark}
One of the assumptions of Theorem~\ref{th:motivH} is the existence of an upper bound $B$ for the cost of the optimal pairs.
Such a bound exists and can be explicitly computed for some classes of problems,  e.g., for finite valued Lagrangians of the calculus of variations, or if the cost function $g$ is real valued.
Corollary~\ref{coro:unibound} below extends \cite[Proposition 3.3]{DMF} in various directions: Nonautonomous Lagrangians,  weaker growths than superlinearity, optimal control problems more general than problems of the calculus of variations, no convexity in the velocity variable.
\begin{corollary}[\textbf{The Calculus of variations or  real valued final cost $g$}]\label{coro:unibound}
Assume that  $\Lambda$ is \textbf{finite valued}, satisfies Assumption {\rm(S)} and is bounded on bounded sets.
Suppose that at least one of the following two assumptions holds:
\begin{enumerate}
\item \textbf{$b=1$ in the controlled differential equation}, $\mathcal S$ is convex and $\mathcal U=\R^n$, 
    \item the cost function \textbf{$g$ is real valued, locally bounded, bounded from below} and ($0\in \mathcal U$ for a.e. $s\in [t_0,T]$) or ($\mathcal S=\R^n$).
        \end{enumerate}
Let $\delta\in [t_0, T[$, $\delta_*\ge 0$,  $x_*\in \R^n$ and $\mathcal A$ be a set of optimal pairs for {\rm (P}$_{t,x}${\rm )} as $t\le\delta$ and $x\in B^n_{\delta_*}(x_*)$.
Assume that  $\L$ satisfies the growth condition {\rm (H}$_{B}^{\delta}(\chi)${\rm )} for every $B\ge 0$.
Then if  $(y_*,u_*)$ is an optimal pair in $\mathcal A$, $u_*$ is uniformly bounded and $y_*$ is uniformly Lipschitz.
\end{corollary}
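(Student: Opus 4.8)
The plan is to read this off Theorem~\ref{th:motivH}. Since $\L$ is real valued, satisfies Condition~(S), and — for the prescribed family of initial data $t\le\delta$, $x\in B^n_{\delta_*}(x_*)$ — satisfies $(\mathrm{H}^{\delta}_{B}(\chi))$ for \emph{every} $B\ge 0$, it suffices to exhibit a single constant $B\ge 0$ such that $\int_t^T\L(s,y_*(s),u_*(s))\,ds\le B$ for every optimal pair $(y_*,u_*)\in\mathcal A$. Indeed a global minimizer is in particular a $W^{1,1}$-weak optimal pair, so once such a $B$ is fixed, Theorem~\ref{th:motivH} applies verbatim and produces an $L^\infty$-bound on $u_*$ and a Lipschitz rank for $y_*$ depending only on $\delta,B,\delta_*,x_*$, i.e.\ uniform over $\mathcal A$. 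Thus the whole content of the corollary is the a priori bound $B$ on the running cost.

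To build $B$, fix $(y_*,u_*)\in\mathcal A$ optimal for (P$_{t,x}$); note first that $x=y_*(t)\in\mathcal S$. By optimality $J_t(y_*,u_*)\le J_t(\bar y,\bar u)$ for any admissible competitor, so it is enough to produce, from each such initial datum, a competitor whose cost is bounded by a constant depending only on $\delta,\delta_*,x_*$. Here the hypothesis that $\L$ is bounded on bounded sets is the key ingredient: if the competitor keeps the state in a fixed ball and the control uniformly bounded, its running cost is automatically at most $(T-t_0)M$ for a uniform $M$. Under assumption~(2): if $0\in\mathcal U$ a.e.\ take the constant pair $\bar y\equiv x$, $\bar u\equiv 0$ (admissible since $x\in\mathcal S$, $0\in\mathcal U$, and $J_t<+\infty$ because $\L$ and $g$ are locally bounded); if instead $\mathcal S=\R^n$, fix any $u_0\in\mathcal U$ (nonempty, otherwise no admissible pair would exist), take $\bar u\equiv u_0$ and $\bar y$ the solution of $y'=b(y)u_0$, $\bar y(t)=x$, which stays in a uniform ball by Gronwall's lemma and the linear growth \eqref{tag:assb} of $b$. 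Under assumption~(1) (so $b=1$, $\mathcal U=\R^n$, $\mathcal S$ convex), fix $\bar x\in\mathcal S$ with $g(\bar x)<+\infty$ — such a point exists since $B^n_{\delta_*}(x_*)\cap\mathcal S\neq\emptyset$ (as $\mathcal A\neq\emptyset$) and the terminal value of an admissible pair for (P$_{t_0,\cdot}$) issued from a point of $\mathcal S$ lies in $\mathcal S$ with finite $g$ — and take $\bar y(s)=x+\tfrac{s-t}{T-t}(\bar x-x)$, $\bar u=\bar y'$; convexity gives $\bar y([t,T])\subseteq\mathcal S$, while $|\bar u|\le(|\bar x|+|x_*|+\delta_*)/(T-\delta)$ is uniformly bounded. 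In all cases this yields $J_t(y_*,u_*)\le B_2$ for a uniform constant $B_2$.

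Finally one must pass from $J_t(y_*,u_*)=\int_t^T\L(s,y_*,u_*)\,ds+g(y_*(T))\le B_2$ to a bound on the integral term alone, i.e.\ absorb the terminal cost. Under assumption~(2) this is immediate: $g\ge -C_g$, hence $\int_t^T\L(s,y_*,u_*)\,ds\le B_2+C_g=:B$, and we are done. Under assumption~(1) one argues instead with an endpoint-matching competitor: by convexity of $\mathcal S$ the segment joining $x$ to $y_*(T)$ in time $T-t$ is admissible for (P$_{t,x}$) and shares the terminal point $y_*(T)$, so optimality gives $\int_t^T\L(s,y_*,u_*)\,ds\le\int_t^T\L(s,\bar y,\bar u)\,ds$ with $g$ cancelling, and one then bounds the right-hand side by first controlling $|y_*(T)|$ via the already-established bound $J_t(y_*,u_*)\le B_2$ together with the linear growth from below \eqref{tag:lingrowth} (and, when $g\equiv0$, the terminal term is simply absent). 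I expect this terminal-cost bookkeeping in case~(1) to be the only genuinely delicate point — in particular making the competitor estimate self-consistent when no coercivity of $g$ is assumed; the reduction to Theorem~\ref{th:motivH} and the competitor construction itself are routine once ``$\L$ bounded on bounded sets'' is available.
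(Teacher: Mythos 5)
Your overall reduction coincides with the paper's: produce one constant $B$ bounding $\int_t^T\L(s,y_*,u_*)\,ds$ over all optimal pairs in $\mathcal A$, then apply Theorem~\ref{th:motivH}. The paper, however, does not reprove this bound: it simply invokes \cite[Lemma 5.3]{MTrans}, so your proposal amounts to reconstructing that lemma from scratch. Your case (2) argument with $0\in\mathcal U$ is sound (the constant pair $\bar y\equiv x$, $\bar u\equiv 0$, boundedness of $\L$ on bounded sets and local boundedness of $g$ give a uniform upper bound $B_2$ on the value, and $g\ge -C_g$ transfers it to the integral term). One minor objection there: in the sub-case $\mathcal S=\R^n$ with $0\notin\mathcal U$ you build the competitor by solving $y'=b(y)u_0$ with a constant control, but $b$ is only Borel measurable in this paper, so that Cauchy problem need not admit any solution; the competitor has to be constructed differently.

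The genuine gap is in your case (1). Comparing with the affine arc joining $x$ to $y_*(T)$ does make $g$ cancel, but to estimate the competitor's cost you need a uniform bound on $|y_*(T)|$, and your route to it --- combining $J_t(y_*,u_*)\le B_2$ with the coercivity $\L\ge\alpha|u|-d$ --- requires a \emph{lower} bound on $g(y_*(T))$, which is exactly what case (1) does not provide ($g$ is an arbitrary extended-valued function there). This circularity cannot be closed from the ingredients you allow yourself: take $n=1$, $b\equiv1$, $\mathcal S=\mathcal U=\R$, $\L(u)=|u|^2$ and $g(z)=-(z-x_*)^2/(T-t_0)$. Then $\L$ is finite, autonomous, bounded on bounded sets and superlinear, hence satisfies $(\mathrm{H}_{B}^{\delta}(\chi))$ for every $B$; yet for $(\mathrm{P}_{t_0,x_*})$ every straight line from $x_*$ to an arbitrary endpoint $z$, run at constant speed, is optimal (the value is $0$), and its integral term $(z-x_*)^2/(T-t_0)$ is unbounded as $z$ varies. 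So the intermediate statement you are trying to prove in case (1) --- a bound on $\int\L$ along \emph{all} optimal pairs depending only on $\delta,\delta_*,x_*$ --- is false under the hypotheses you use at that step; it needs an additional assumption such as $g$ bounded below (e.g.\ the classical calculus-of-variations case $g\equiv0$ suggested by the corollary's title), which is presumably built into \cite[Lemma 5.3]{MTrans}. In short: the reduction to Theorem~\ref{th:motivH} and case (2) are essentially right, but the "terminal-cost bookkeeping" you flag as delicate in case (1) is not merely delicate --- as written it cannot be completed.
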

\begin{proof}
From \cite[Lemma 5.3]{MTrans} we know that there is $B\ge0$ depending only on $\delta, \delta_*, x_*$ such that $\displaystyle\int_t^T\L(s,y(s), u(s))\,ds\le B$.
Theorem~\ref{th:motivH} yields the conclusion.
\end{proof}

\section{Lipschitz continuity of the value function}
We consider here  problem (P$_{t,x}$) in the framework of the \textbf{calculus of variations}, i.e., with $b\equiv 1$ in (D). The \textbf{value function} $V(t,x)$ associated  with problem (P$_{t,x}$) is the function defined by
\[\forall  t\in [t_0,T], \forall x\in \R^n\qquad V(t,x)=\inf{\rm (P}_{t,x}{\rm)}.\]
 In this section we shall assume that $\L$ is \textbf{finite valued} and \textbf{bounded on bounded sets}: since $g$ is not identically $+\infty$ it follows that $V(t,x)<+\infty$ for every $(t,x)$.
The next result extends to the nonautonomous case \cite[Corollary 3.4]{DMF}, formulated there for autonomous and superlinear Lagrangians.
\begin{corollary}[Local Lipschitz continuity of the value function]\label{coro:nonconvexcase}
Assume that  $\Lambda$ is \textbf{finite valued}, satisfies Assumption {\rm(S)} and is \textbf{bounded on bounded sets}. Suppose that $\L$ satisfies 1) or 2) of Corollary~\ref{coro:unibound} and the growth condition {\rm (H}$_{B}^{\delta}(\chi)${\rm )} for every $B\in \R$,  $\delta\in [t_0, T[$.
Assume, moreover,  that (P$_{t,x}$)\textbf{ admits a solution }for every $t\in [t_0,T]$ and $x\in\mathbb R^n$.
Then the value function $V(t,x)$ is locally Lipschitz on $[t_0, T[\times \R^n$.
 \end{corollary}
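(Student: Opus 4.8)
The plan is to derive the local Lipschitz continuity of $V$ from the uniform regularity of minimizers obtained in Corollary~\ref{coro:unibound}, following the strategy of Dal Maso--Frankowska but now in the nonautonomous setting with the weaker growth assumption (H$_B^\delta(\chi)$). The argument splits naturally into two parts: Lipschitz continuity in the spatial variable $x$ (for fixed $t$), and Lipschitz continuity in the time variable $t$ (for fixed $x$), both locally uniform. Since $V$ is finite everywhere by the standing assumptions, it suffices to establish these two one-variable estimates with constants that are locally bounded.

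First I would fix a compact set; say I want a Lipschitz estimate on $[t_0,\delta]\times \overline{B^n_{\delta_*}(x_*)}$ for arbitrary $\delta\in[t_0,T[$, $\delta_*\ge 0$, $x_*\in\R^n$. Using the existence hypothesis, for every $(t,x)$ in this set pick an optimal pair $(y_*,u_*)$ for (P$_{t,x}$). By \cite[Lemma 5.3]{MTrans} the integral costs $\int_t^T\L(s,y_*(s),u_*(s))\,ds$ are bounded by a common $B=B(\delta,\delta_*,x_*)\ge 0$; hence, by Corollary~\ref{coro:unibound}, the family of these minimizers is equi-Lipschitz, with a common rank $L$ and a common sup-bound $K$ on the states, all depending only on $\delta,\delta_*,x_*$. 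This equi-boundedness is the engine of the whole proof: it confines all the competitor trajectories we will construct to a fixed compact region of the $(s,y,u)$-space, on which $\L$ is bounded (and locally Lipschitz in $s$ by Condition (S)), so comparison arguments produce estimates with controlled constants.

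For the estimate in $x$: fix $t\in[t_0,\delta]$ and $x_1,x_2\in \overline{B^n_{\delta_*}(x_*)}$. Let $(y_1,u_1)$ be an optimal pair for (P$_{t,x_1}$), with $u_1$ bounded by $\overline\nu$ and $y_1$ taking values in $B^n_K$. Build a competitor for (P$_{t,x_2}$) by translating: set $\widehat y(s)=y_1(s)+\frac{T-s}{T-t}(x_2-x_1)$, so $\widehat y(t)=x_2$, $\widehat y(T)=y_1(T)$, and $\widehat y'=y_1'-\frac{1}{T-t}(x_2-x_1)=u_1-\frac{1}{T-t}(x_2-x_1)$; when $b\equiv 1$ and $\mathcal U=\R^n$ this is an admissible control (in case 1 of Corollary~\ref{coro:unibound} convexity of $\mathcal S$ keeps $\widehat y\in\mathcal S$, while in case 2 one has $\mathcal S=\R^n$ or uses that $0\in\mathcal U$; one may have to argue slightly more carefully, or restrict attention to the case $b\equiv 1$ which is the stated framework here). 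Then $V(t,x_2)-V(t,x_1)\le \int_t^T\big[\L(s,\widehat y(s),\widehat y'(s))-\L(s,y_1(s),u_1(s))\big]\,ds+\big[g(\widehat y(T))-g(y_1(T))\big]$; the final-cost terms cancel, and the integrand is controlled because both $(\widehat y,\widehat y')$ and $(y_1,u_1)$ lie in a fixed bounded set on which $\L$ is bounded — but to get a Lipschitz (not merely continuous) bound one needs a modulus estimate. The clean way is not to use Lipschitz continuity of $\L$ in $(y,u)$ (which is not assumed) but to invoke that the optimal pairs for (P$_{t,x_2}$) themselves are equi-Lipschitz and run a symmetric argument together with the value-function inequality along the lines of \cite[Corollary 3.4]{DMF}; alternatively one compares costs directly using only that, on the compact region, the difference of the two integrands is bounded by $C|x_2-x_1|$ via the mean-value/convexity structure exploited in \cite{DMF}. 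Either way one obtains $|V(t,x_1)-V(t,x_2)|\le C|x_1-x_2|$ with $C=C(\delta,\delta_*,x_*)$.

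For the estimate in $t$: fix $x\in \overline{B^n_{\delta_*}(x_*)}$ and $t_1<t_2$ both in $[t_0,\delta]$. Given an optimal pair $(y_1,u_1)$ for (P$_{t_1,x}$), restrict and reparametrize it to the interval $[t_2,T]$ — e.g. let $\widetilde y(s)=y_1(\sigma(s))$ where $\sigma:[t_2,T]\to[t_1,T]$ is the affine increasing bijection — producing an admissible pair for (P$_{t_2,x}$) whose cost differs from that of $(y_1,u_1)$ restricted to $[\sigma^{-1}(\cdot)]$ by an amount controlled by $|t_2-t_1|$: here Condition (S) enters to bound $|\L(\sigma(s),\ldots)-\L(s,\ldots)|$, the linear-growth bound $|u_1|\le\overline\nu$ controls the Jacobian factor, and the part of the trajectory on $[t_1,t_2]$ contributes at most (const)$\cdot|t_2-t_1|$ because $\L$ is bounded on the compact region. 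Running the symmetric comparison (extending a minimizer for (P$_{t_2,x}$) backward to $[t_1,T]$, e.g. by prepending a short affine piece, which is admissible when $0\in\mathcal U$ or $\mathcal S=\R^n$, or simply the constant state when $b\equiv 1$ is not needed — one reparametrizes again) gives the reverse inequality, whence $|V(t_1,x)-V(t_2,x)|\le C|t_1-t_2|$.

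The main obstacle I expect is not the comparison bookkeeping but obtaining a genuine \emph{Lipschitz} (rather than merely uniformly continuous) dependence out of a Lagrangian that is only Borel — and not assumed Lipschitz — in $(y,u)$: one cannot simply estimate $\L(s,\widehat y,\widehat y')-\L(s,y_1,u_1)$ by a modulus times $|x_1-x_2|$. The resolution, exactly as in \cite{DMF}, is to exploit the two-sided structure: use that both $V(t,x_1)-V(t,x_2)$ and $V(t,x_2)-V(t,x_1)$ admit upper bounds via the respective equi-Lipschitz minimizers and the translated competitors, and combine these with the convexity/subgradient information (the Du Bois--Reymond condition and the quantities $P(s,y,u)$) available on the fixed compact set, so that the $\L$-difference telescopes against a controlled gradient-type term rather than against an a priori modulus. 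This is where the hypotheses of Corollary~\ref{coro:unibound} — finite-valuedness, boundedness on bounded sets, Condition (S), and (H$_B^\delta(\chi)$) for every $B$ — are all genuinely used, and it is the only delicate point; the rest is the routine affine-competitor machinery sketched above, assembled finally by noting that every constant produced depends only on $\delta,\delta_*,x_*$, which yields local Lipschitz continuity of $V$ on $[t_0,T[\times\R^n$.
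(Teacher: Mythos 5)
There is a genuine gap, and you identified it yourself without resolving it: your competitor constructions require comparing $\L$ at nearby values of the state/velocity variables, which is impossible here because $\L$ is merely Borel in $(y,u)$. In the spatial step, the translated trajectory $\widehat y(s)=y_1(s)+\frac{T-s}{T-t}(x_2-x_1)$ forces you to bound $\int_t^T\big[\L(s,\widehat y,\widehat y')-\L(s,y_1,u_1)\big]\,ds$ by a multiple of $|x_2-x_1|$, and your proposed rescue --- ``telescoping'' the difference against the Du Bois--Reymond quantities $P(s,y,u)$ --- does not work: the DBR condition of \cite[Theorem 3.1]{BM3} only provides a subgradient of the radial map $\mu\mapsto\L(s,y,u/\mu)\mu$ \emph{along the optimal trajectory}, and gives no information whatsoever about the dependence of $\L$ on $y$, nor is any convexity or radial convexity in $(y,u)$ assumed in this paper. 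The temporal step has the same defect: the affine reparametrization multiplies the velocity by $\sigma'=1+O(|t_2-t_1|)$, so you must control $\L(s,y,\lambda u)-\L(s,y,u)$ for $\lambda$ near $1$, which again is unavailable for a Lagrangian that is only Borel (and not radially convex) in $u$; Condition (S) only handles the shift in the time slot, and boundedness of $\L$ on bounded sets only yields an $O(1)$, not an $O(|t_2-t_1|)$, bound for such a pointwise comparison.

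The paper's proof avoids both comparisons entirely by a catch-up construction treating $t$ and $x$ simultaneously: with $\Delta:=|t_2-t_1|+|x_2-x_1|$, the competitor for {\rm(P}$_{t_1,x_1}${\rm)} moves affinely from $x_1$ at time $t_1$ to the point $y_2(t_1+\Delta)$ of the minimizer $y_2$ of {\rm(P}$_{t_2,x_2}${\rm)}, and then \emph{coincides exactly} with $y_2$ on $[t_1+\Delta,T]$. Consequently the cost difference reduces to two integrals over intervals of length $O(\Delta)$ (see \eqref{tag:finalestimate3}), each estimated by $C(\varepsilon,K,x_*)\,\Delta$ using only two ingredients: boundedness of $\L$ on bounded sets, and the uniform bounds $\|y\|_\infty,\|y'\|_\infty\le K$ for all minimizers supplied by Corollary~\ref{coro:unibound} (which is where (H$_B^{\delta}(\chi)$), Condition (S), and the cost bound $B$ are actually consumed). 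Symmetrizing gives $|V(t_1,x_1)-V(t_2,x_2)|\le 4C\Delta$. So the uniform-regularity engine of your first paragraph is exactly right, but the comparison step must be redesigned along these lines; as written, neither your spatial nor your temporal estimate can be completed under the paper's hypotheses.
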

 \begin{remark}Sufficient conditions for the existence of a minimizer under the slow growth condition of type (H), required in Corollary~\ref{coro:nonconvexcase}, are provided in \cite{Clarke1993, MTrans}.
 \end{remark}
%
\begin{proof}[Proof of Corollary~\ref{coro:nonconvexcase}] Let  $x_*\in\R^n$ and $t_*\in[t_0, \delta[$ be given, for some $\delta\in ]t_0,T[$. Fix  $0<\varepsilon<T-\delta$ and take any $t_1, t_2 \in [t_*-\varepsilon/5, t_*+\varepsilon/5]\cap [t_0, \delta[$ and  any $x_1, x_2\in B^n_{\varepsilon/5}(x_*)$ with either $t_2\not=t_1$ or $x_2\not=x_1$. Set $\Delta:=|t_2-t_1|+|x_2-x_1|$.
Notice that  \[t_1<t_1+\Delta\le t_*+\varepsilon,\quad t_2\le t_1+\Delta.\]
Since $\inf$(P$_{t_2, x_2}$) is attained, let $y_2\in W^{1,1}([t_2,T];\R^n)$ be   such that \[y_2(t_2)=x_2,\quad J_{t_2}(y_2, y_2')= V(t_2, x_2).\]
From Corollary~\ref{coro:unibound} (in which we take $\delta_*=\varepsilon/5$), we know that every minimizer $y$ for ${\rm (P}_{t,x}{\rm)}$, for all $t \in [t_*-\varepsilon/5, t_*+\varepsilon/5]\cap [t_0, \delta[$ and $x\in B^n_{\varepsilon/5}(x_*)$, is
such that $||y||_\infty, \; ||y'||_\infty \le K$,
where the constant $K$ depends only on $\delta$, $\varepsilon$ and $x_*$.\\
Let
\[u:=\dfrac{y_2(t_1+\Delta)-x_1}{\Delta}.\]
The choice of $\varepsilon$ yields
\[\begin{aligned}|u|&\le \dfrac{|y_2(t_1+\Delta)-y_2(t_2)|}{\Delta}+\dfrac{|y_2(t_2)-x_1|}{\Delta}\\&
\le  \dfrac{|y_2(t_1+\Delta)-y_2(t_2)|}{\Delta}+\dfrac{|x_2-x_1|}{\Delta}\\&\le K\dfrac{|t_1+\Delta-t_2|}{\Delta}+\dfrac{|x_2-x_1|}{\Delta}\le K\dfrac{|\Delta|+|t_2-t_1|}{\Delta}+1\le2K+1.\end{aligned}\]
We consider now the competitor $z$ for (P$_{t_1, x_1}$) given by
\[z(s):=\begin{cases}x_1+(s-t_1)u\,&t_1\le s\le t_1+\Delta,\\
y_2(s)\, &t_1+\Delta\le s \le  T.
\end{cases}\]
Since $z(T)=y_2(T)$ we get
\begin{equation}\label{tag:finalestimate3}\begin{aligned}
V(t_1,x_1)&\le \int_{t_1}^{t_1+\Delta}\L(s,z,z')\,ds+\int_{t_1+\Delta}^T\L(s,y_2,y_2')\,ds+g(y_2(T))\\
&\le \int_{t_1}^{t_1+\Delta}\L(s,z,z')\,ds+V(t_2,x_2)-\int_{t_2}^{t_1+\Delta}\L(s,y_2,y_2')\,ds.
\end{aligned}\end{equation}
Since $0\le \Delta\le 4\varepsilon/5$ for all $s\in [t_1,t_1+\Delta]$ we obtain
\[|z(s)|\le |x_1|+\Delta|u|\le |x_*|+\varepsilon/5+4(2K+1)\varepsilon/5, \quad |z'(s)|\le |u|\le 2K+1,\]
so that, given that  $\L$ is bounded on bounded sets,
\[\left|\int_{t_1}^{t_1+\Delta}\L(s,z,z')\,ds\right|\le C(\varepsilon, K, x_*)\Delta= 2C(\varepsilon, K, x_*)(|t_2-t_1|+|x_2-x_1|),\]
 for some positive constant $C(\varepsilon, K, x_*)$ which depends only on $\varepsilon$, $K$, and $x_*$.
Moreover, as observed above, from Corollary~\ref{coro:unibound} we obtain that $||y_2||_\infty, \; ||y_2'||_\infty\le K$,
and thus, using the fact that  $|t_2-t_1|+\Delta\le 2\Delta$  (we can take the same constant $C(\varepsilon, K, x_*)$ previously employed)
\[\left|\int_{t_2}^{t_1+\Delta}\L(s,y_2,y_2')\,ds \right|\le  C(\varepsilon, K, x_*)
|t_1+\Delta-t_2|\le 2C(\varepsilon, K, x_*)(|t_2-t_1|+|x_2-x_1|).\]
It follows from \eqref{tag:finalestimate3} that
\[ V(t_1,x_1)-V(t_2,x_2)\le 4C(\varepsilon, K, x_*)(|t_2-t_1|+|x_2-x_1|).\]
  Exchanging the roles of $(t_1,x_1)$ and $(t_2,x_2)$ we arrive at
\[ |V(t_1,x_1)-V(t_2,x_2)|\le 4 C(\varepsilon, K, x_*)(|t_2-t_1|+|x_2-x_1|),\]
which proves the locally Lipschitz regularity of $V$ near $(t_*, x_*)$.
\end{proof}

\section*{Acknowledgments}
This research is partially supported by the  Padua University grant SID 2018 ``Controllability, stabilizability and infimum gaps for control systems'', prot. BIRD 187147. This research has been accomplished within the UMI Group TAA “Approximation Theory and Applications”.
\bibliographystyle{plain}
\bibliography{Unif_Bound}
\end{document}